\documentclass[final,11pt, oneside, reqno]{article}   	
\usepackage[utf8]{inputenc}
\usepackage{geometry}                		
\geometry{a4paper}                   		
\usepackage{graphicx}				
\sloppy								

%
%
\newcommand{\takeout}[1]{\empty}

\usepackage{amssymb, amsmath, wasysym, stmaryrd, multicol, bbm, mathrsfs}
\usepackage{amsthm,thmtools}
\usepackage[affil-it]{authblk}
\usepackage{tikz-cd}
\usepackage{import}
\usepackage[noadjust]{cite} 
\usepackage{amsthm,thmtools}
\usepackage[affil-it]{authblk}
\usepackage{tikz-cd}
\usepackage{import}


\usepackage{dsfont}

\usepackage{xcolor} 

\newcommand{\ownthmSpaceAbove}{5pt}
\newcommand{\ownthmSpaceBelow}{5pt}
\newcommand{\resetCurThmBraces}{%
\gdef\curThmBraceOpen{(}%
\gdef\curThmBraceClose{)}}
\resetCurThmBraces
\newcommand{\removeThmBraces}{%
\gdef\curThmBraceOpen{}%
\gdef\curThmBraceClose{}}

\declaretheoremstyle[
    spaceabove=\ownthmSpaceAbove,
    spacebelow=\ownthmSpaceBelow,
    headpunct=.,
    postheadspace=.5em,
    notebraces={\curThmBraceOpen}{\curThmBraceClose},
    postheadhook={\resetCurThmBraces},
]{definition}
\declaretheoremstyle[
    style=definition,
    bodyfont=\itshape,
    notebraces={\curThmBraceOpen}{\curThmBraceClose},
    postheadhook={\resetCurThmBraces},
]{theorem}

%
%
\usepackage{hyperref}
\hypersetup{hidelinks,final,bookmarks}

  %
  %

\usepackage[notref,notcite]{showkeys}
\usepackage{seqsplit}
\usepackage{xstring}
\newcommand{\defaultshowkeysformat}[1]{%
\StrSubstitute{#1}{ }{\textvisiblespace}[\TEMP]%
\parbox[t]{\marginparwidth}{\raggedright\normalfont\small\ttfamily\(\{\){\color{red!50!black}\expandafter\seqsplit\expandafter{\TEMP}}\(\}\)}%
}

\renewcommand*\showkeyslabelformat[1]{%
\noexpandarg%
\defaultshowkeysformat{#1}%
}

%
%
\usepackage[footnote,marginclue,nomargin]{fixme}

%
%
\usepackage[inline]{enumitem}
\setlist[enumerate,1]{label=(\arabic*),font=\normalfont,align=left,leftmargin=0pt,labelindent=0pt,listparindent=\parindent,labelwidth=0pt,itemindent=!,topsep=3pt,parsep=0pt,itemsep=3pt,start=1}
\setlist[enumerate,2]{label=(\alph*),font=\normalfont,align=left,leftmargin=0pt,labelindent=0pt,listparindent=\parindent,labelwidth=0pt,itemindent=!,topsep=3pt,parsep=0pt,itemsep=3pt,start=1}
\setlist[itemize]{labelindent=*,leftmargin=*,topsep=5pt,itemsep=3pt}
\setlist[description]{labelindent=*,leftmargin=*,itemindent=-1em}

%
%
\numberwithin{equation}{section}

\newcommand{\Pos}{\mathsf{Pos}}
\newcommand{\Posf}{\Pos_\mathsf{f}}
\newcommand{\Posz}{\Pos_{0}}
\newcommand{\FinPos}{\mathsf{Fin}(\Pos)}
\newcommand{\Mndf}{\mathop{\mathsf{FinMnd}}}
\newcommand{\Set}{\mathsf{Set}}
\newcommand{\C}{\mathscr C}
\newcommand{\D}{\mathscr D}
\newcommand{\Tcat}{\mathscr T}
\newcommand{\Vcat}{\mathscr V}
\DeclareMathOperator{\Mod}{\mathop{\mathsf{Mod}}}  
\DeclareMathOperator{\Alg}{\mathop{\mathsf{Alg}}}  
\newcommand{\Sig}{\mathsf{Sig}}
\newcommand{\AlgSigma}{\Alg \Sigma}
\newcommand{\AlgcSigma}{\Alg_\mathsf{c} \Sigma}
\newcommand{\V}{\mathcal V}
\newcommand{\T}{\mathbb T}
\newcommand{\M}{\mathbb M}
\newcommand{\E}{\mathcal E}
\newcommand{\TSigma}{T_\Sigma}
\newcommand{\TcSigma}{T^\mathsf{c}_\Sigma}
\newcommand{\Term}{\mathscr{T}}
\newcommand{\TermG}{\Term(\Gamma)}
\DeclareMathOperator{\Lan}{\mathsf{Lan}}
\newcommand{\vt}{\V_\T}
\newcommand{\tv}{\T_\V}
\newcommand{\colim}{\mathop{\mathrm{colim}}}
\newcommand{\obj}{\mathop{\mathsf{obj}}}

\newcommand{\copower}{\bullet}

\newcommand{\N}{\mathds{N}}
\newcommand{\A}{\mathscr{A}}

\newcommand{\id}{\mathsf{id}}

\newcommand{\subto}{\hookrightarrow}
\newcommand{\monoto}{\rightarrowtail}
\newcommand{\epito}{\twoheadrightarrow}
\newcommand{\xra}[1]{\xrightarrow{~#1~}}

\newcommand{\Conv}{{C_\omega}}
\newcommand{\powd}{{\mathcal{P}^\downarrow_\omega}}

\newcommand{\fpair}[1]{\langle #1 \rangle}

\newcommand{\card}[1]{\mathop{\mathsf{card}} #1}

\newcommand{\cotensor}{\pitchfork}
\def\wh{\widehat}

\newcommand{\op}{\mathsf{op}}

%
%
\theoremstyle{theorem}
\newtheorem{theorem}{Theorem}[section]
\newtheorem{proposition}[theorem]{Proposition}
\newtheorem{corollary}[theorem]{Corollary}
\newtheorem{lemma}[theorem]{Lemma}

\theoremstyle{definition}
\newtheorem{definition}[theorem]{Definition}
\newtheorem{example}[theorem]{Example}
\newtheorem{notation}[theorem]{Notation}
\newtheorem{observation}[theorem]{Observation}
\newtheorem{remark}[theorem]{Remark}

\takeout{
\makeatletter
\def\@maketitle{%
  \newpage
  \null
  \vskip 2em%
  \begin{center}%
  \let \footnote \thanks
    {\Large\bfseries \@title \par}%
    \vskip 1.5em%
    {\normalsize
      \lineskip .5em%
      \begin{tabular}[t]{c}%
        \@author
      \end{tabular}\par}%
    \vskip 1em%
    {\normalsize \@date}%
  \end{center}%
  \par
  \vskip 1.5em}
\makeatother}

\begin{document}
\FXRegisterAuthor{ja}{aja}{JA}
\FXRegisterAuthor{cf}{acf}{CF}
\FXRegisterAuthor{sm}{asm}{SM}
\FXRegisterAuthor{ls}{als}{LS}

\title{Finitary Monads on the Category of Posets}

\author{Ji\v{r}\'{i} Ad\'{a}mek%
	\thanks{Supported by the Grant Agency of the Czech
          Republic under the grant 19-0092S.}
      }
\affil{\small{Department of Mathematics, Technical University of Prague,
  Czech Republic, and \\
  Institute of Theoretical Computer Science, Technical University Braunschweig, Germany}}
\author{Chase Ford%
  \thanks{Supported by Deutsche Forschungsgemeinschaft
    (DFG, German Research Foundation) as part of the Research
    and Training Group 2475 ``Cybercrime and Forensic Computing" (grant
    number 393541319/GRK2475/1-2019).}
}
\author{Stefan Milius%
  \thanks{Supported by Deutsche Forschungsgemeinschaft (DFG) under
    projects MI~717/5-2 and MI~717/7-1.}
}

\author{Lutz Schr\"{o}der}
\affil{\small{Department of Computer Science,
    Friedrich-Alexander-Universit\"{a}t Erlangen-N\"{u}rnberg (FAU),
    Germany}
}
\maketitle

\centerline{Dedicated to John Power on the occasion of his 60$^\text{th}$ birthday.}

\begin{abstract}
  Finitary monads on $\Pos$ are characterized as the precisely the
  free-algebra monads of varieties of algebras. These are classes of
  ordered algebras specified by inequations in context.  Analagously,
  finitary enriched monads on $\Pos$ are characterized: here we work
  with varieties of coherent algebras which means that their
  operations are monotone.
\end{abstract}


\section{Introduction}\label{S:intro}

Equational specification usually applies classes of (often many-sorted) 
finitary algebras specified by equations. That is, varieties of algebras 
over the category $\Set^S$ of $S$-sorted sets. This is well known to 
be equivalent to applying finitary monads over $\Set^S$, i.e.~monads 
preserving filtered colimits: every variety $\V$ yields a free-algebra 
monad $\T_{\V}$ on $\Set^S$ which is finitary and whose Eilenberg-Moore 
category is isomorphic to $\V$. Conversely, every finitary monad $\T$ on 
$\Set^S$ defines a canonical $S$-sorted variety $\V$ whose free-algebra 
monad is isomorphic to $\T$. 

There are cases in which algebraic specifications use partially
ordered sets rather than sets without a structure. The goal of our
paper is to present for the category $\Pos$ of partially ordered
sets\smnote{Don't delete; otherwise $\Pos$ is not defined.} an
analogous characterization of finitary monads: we define varieties of
ordered algebras which allow us to represent (a)~all finitary monads
on $\Pos$ and (b)~all enriched finitary monads on $\Pos$ as the free-algebra
monads of varieties. `Enriched' refers to $\Pos$ as a cartesian closed
category: a monad is enriched if its underlying functor $T$ is \emph{locally
monotone} ($f\leq g$ in $\Pos(A, B)$ implies $Tf\leq Tg$ in $\Pos(TA, TB))$.
Case (b)~works with algebras on posets such that the operations are monotone 
(and as morphisms we take monotone homomorphisms). Whereas for (a)~we have 
to work with algebras on posets whose operations are not necessarily monotone 
(but whose morphisms are). To distinguish these cases, we shall call an algebra 
\emph{coherent} if its operations are all monotone. 

A basic step, in which we follow the excellent presentation of
finitary monads on enriched categories due to Kelly and
Power~\cite{KP93}, is to work with operation symbols whose arity is a
finite poset rather than a natural number; we briefly recall the
approach of \emph{op. cit.} in \autoref{S:eqpres}. Just as natural numbers $n=\{0,1,\dots, n-1\}$ 
represent all finite sets up to isomorphism, we choose a representative set 
\[
  \Posf
\]
of finite posets up to isomorphism. Members of $\Posf$ are called \emph{contexts}\lsnote{I suggest calling them \emph{arities}; in fact that term is used in the next sentence.}.
 A \emph{signature} is then a set $\Sigma$ of operation symbols of arities from $\Posf$. 
 More precisely, $\Sigma$ is a collection of sets $(\Sigma_\Gamma)_{\Gamma \in
  \Posf}$. Thus, a $\Sigma$-algebra is a poset $A$ together with an
operation $\sigma_A$, for every $\sigma \in \Sigma_\Gamma$, which
assigns to every monotone map $u\colon \Gamma \to A$ an element
$\sigma_A(u)$ of $A$. 
For example, let $\mathbbm{2}$ be the two-chain in $\Posf$ given by $x<y$. 
Then an operation symbol $\sigma$ of arity $\mathbbm{2}$ is interpreted in an 
algebra $A$ as a partial function $\sigma_A\colon A\times A\rightarrow A$ whose 
definition domain consists of all comparable pairs in $A$.

Given a signature $\Sigma$ we form, for every context
$\Gamma\in\Posf$, the set $\TermG$ of \emph{terms in context}
$\Gamma$. It is defined as usual in universal algebra by ignoring the
order structure of contexts. Then, for every $\Sigma$-algebra $A$,
whenever a monotone function $f\colon\Gamma\rightarrow A$ is given
(i.e.~whenever the variables of context $\Gamma$ are interpreted in
$A$) we define an evaluation of terms in context $\Gamma$. This is a
partial map $f^\#$ assigning a value to a term $t$ provided that
values of the subterms of $t$ are defined and respect the order of
$\Gamma$. This leads to the concept of \emph{inequation in context}
$\Gamma$: it is a pair $(s, t)$ of terms in that context. An algebra
$A$ \emph{satisfies} this inequation if for every monotone
interpretation $f\colon\Gamma\rightarrow A$ we have that both
$f^\#(t)$ and $f^\#(s)$ are defined and $f^{\#}(s)\leq f^{\#}(t)$
holds in $A$. We use the following notation for inequations in
context:
\[
  \Gamma\vdash s\leq t.
\]

By a \emph{variety} we understand a category $\V$ of $\Sigma$-algebras 
presented by a set $\E$ of $\Sigma$-inequations in context. Thus the objects 
of $\V$ are all algebras satisfying each $\Gamma\vdash s\leq t$ in $\E$, and 
morphisms are monotone homomorphisms. We prove that every variety $\V$ 
is monadic over $\Pos$, that is, for the monad $\T_{\V}$ of free $\V$-algebras 
$\V$ is isomorphic to the category $\Pos^{\T_{\V}}$ of algebras for $\T_{\V}$. 
Moreover, $\T_{\V}$ is a finitary monad and, in case $\V$ consists of coherent 
algebras, $\T_{\V}$ is enriched.

Conversely, with every finitary monad $\T$ on $\Pos$ we associate a 
canonical variety whose free-algebra monad is isomorphic to $\T$. This 
process from monads to varieties is inverse to the above assignment 
$\V\mapsto\T_{\V}$. Moreover, if $\T$ is enriched, the canonical variety 
consists of coherent algebras. This leads to a bijection between finitary enriched 
monads and varieties of coherent algebras.

Is it really necessary to work with signatures of operations with
partially ordered arities and terms in context? There is a `natural'
concept of a variety of ordered (coherent) algebras for classical
signatures $\Sigma=(\Sigma_n)_{n\in\N}$. Here terms are elements of
free $\Sigma$-algebras on finite sets (of variables) and a variety is
given by a set of inequations $s\leq t$ where $s$ and $t$ are
terms. Such varieties were studied e.g.~by Bloom and
Wright~\cite{Bloom76, BW83}. Kurz and Velebil~\cite{KV17}
characterized these classical varieties as precisely the exact
categories (in an enriched sense) with a `suitable' generator. In a
recent paper, the first author, Dost\'al, and Velebil~\cite{ADV20} proved that for 
every such variety $\V$ the free-algebra monad
$\T_{\V}$ is enriched and \emph{strongly finitary} in the sense of
Kelly and Lack~\cite{KL93}. This means that the functor $T_{\V}$ is
the left Kan extension\smnote{Please don't delete; I find `reflexive
coinserter' rather unclear, whereas everyone with knowledge of basic
category theory knows Kan extensions.}
of its restriction along the full embedding
$E\colon\Pos_{\mathsf{fd}}\hookrightarrow\Pos$ of finite discrete
posets:
\[
  T_{\V}= \Lan_E (T_{\V}\cdot E).
\]
Conversely, every strongly finitary monad on $\Pos$ is isomorphic to
the free-algebra monad of a variety in this classical sense. This
answers our question above affirmatively: contexts are necessary if
\emph{all} (possibly enriched) finitary monads are to be characterized
via inequations.

\begin{example}
  We have mentioned above a binary operation $\sigma(x, y)$ in context
  $x<y$.\smnote{I think $<$ is correct here.}\cfnote{I agree, $<$ is
    correct.}\lsnote{We should write $\le$ in the contexts, as that is
    what these constraints mean (they are not meant to enforce that
    $x$ and $y$ are different)} For the corresponding variety
  $\AlgSigma$ (with no specified inequations) the free-algebra monad
  is described in \autoref{E:TX}. This monad is not strongly
  finitary~\cite[Ex.~3.15]{ADV20}\lsnote{Shouldn't we prove this claim?}, thus no variety
  with a classical signature has this monad as the free-algebra
  monad.\lsnote{A more natural, if slightly more complicated example
    are `bounded-join-semilattices', i.e.~partial orders in which
    every bounded pair of elements has a binary join.}
\end{example}

\paragraph{Related work}
As we have already mentioned, the idea of using signatures 
in context stems from the work of Kelly and Power~\cite{KP93}.
They presented enriched monads by operations and equations. A
signature in their sense is more general than what we use: it is a
collection of \emph{posets} $(\Sigma_\Gamma)_{\Gamma \in \Posf}$, and
a $\Sigma$-algebra $A$ is then a poset together with a monotone
functions from $\Sigma_\Gamma$ to the poset of monotone functions from
$\Pos(\Gamma, A)$ to $A$ for every context $\Gamma$.

Whereas we deal with the monadic view on varieties of ordered algebras
in the present paper, the view using algebraic theories has been
investigated by Power with coauthors, e.g.~\cite{Pow99, PP01, PP02,
  NP09}, see \autoref{S:enriched}. In particular, the paper~\cite{NP09} works with enriched
categories over a monoidal closed category $\Vcat$\smnote{This should
  be the Vcat macro; the V-macro is for varieties.} for which a
$\Vcat$-enriched base category $\C$ has been chosen. Then enriched
algebraic $\C$-theories are shown to correspond to $\Vcat$-enriched
monads on $\C$. This is particularly relevant for the current paper:
by choosing $\Vcat=\Set$ and $\C=\Pos$ we treat non-enriched finitary
monads on $\Pos$, whereas the choice $\Vcat=\C=\Pos$ covers 
the enriched case.




\paragraph{Acknowledgement} The authors are grateful to Ji\v{r}\'i\
Rosick\'y for fruitful discussion.

\section{Equational Presentations of Monads}\label{S:eqpres}

We now recall the approach to equational presentations of finitary
monads introduced by Kelly and Power~\cite{KP93}; our aim here is to
bring the rest of the paper into this perspective. However, we note
that the signatures used here are more general than those of the
subsequent sections, and (unlike later) some enriched category theory
is used. The reader can decide to skip this section without losing the
connection.

For a locally finitely presentable category $\C$ enriched over a
symmetric monoidal closed category $\Vcat$ Kelly and Power consider
(enriched) monads on $\C$ that are finitary, i.e.~the ordinary
underlying endofunctor preserves filtered colimits. Below we
specialize their approach to $\C = \Pos$ considered as an ordinary
category ($\Vcat = \Set$) or as a category enriched over itself
($\Vcat = \Pos$) as a cartesian closed category. In the first case,
the hom-object $\Pos(A,B)$\smnote{Please do not introduce any bracket
notation for hom-objects here; it is not needed!} is the \emph{set} of all monotone functions
from $A$ to $B$; in the latter case, this is the \emph{poset} of those
functions, ordered pointwise. As in \autoref{S:intro}, a representative
set $\Posf$ of finite posets (called \emph{contexts}) is chosen which
is to be viewed as a full subcategory of $\Pos$. We denote by
\[ 
{\mid}\Posf{\mid}
\]
the corresponding discrete category.

\begin{definition}\label{D:ndsig}
  A \emph{signature} is a functor from ${\mid}\Posf{\mid}$ to $\Pos.$
  In other words, a signature $\Sigma$ is a collection of posets
  $\Sigma_{\Gamma}$ of \emph{operation symbols in context} $\Gamma$
  indexed by $\Gamma\in\Posf$. A morphism $s\colon \Sigma \to \Sigma'$
  of signatures, being a natural transformation, is thus just a family
  of monotone maps $s_\Gamma\colon \Sigma_\Gamma \to \Sigma_\Gamma'$
  indexed by contexts.
  
  We denote by
  \[
    \Sig = [|\Posf|, \Pos]
  \]
  the category of signatures and their morphisms.
\end{definition}

\noindent In the introduction we considered the special case of
signatures where each poset $\Sigma_{\Gamma}$ is discrete, i.e.~we
just have a \emph{set} of operation symbols in context $\Gamma$; for
emphasis, we will call such signatures \emph{discrete}.

\begin{remark}\label{R:tensor}
  Recall~\cite[Def.~6.5.1]{Borceux94-2} the concept of a \emph{tensor} for objects $V \in \Vcat$ and
  $C \in \C$: it is an object $V\otimes C$ of $\C$ together a natural
  isomorphism
  \[
    \C(V \otimes C, X) \cong \Vcat(V, \C(C,X)).
  \]
  in $\Vcat$ which is $\V$-natural in $X$. Here $\Vcat(-,-)$ denotes
  the internal hom-functor of $\Vcat$.

  In the case where $\C = \Pos$ and $\Vcat = \Set$ we get the
  copower
  \[\textstyle
    V\otimes C= \coprod_{V} C,
  \]
  and for $\C=\Vcat=\Pos$ we just get the product in $\Pos$:
  \[
    V\otimes C= V\times C.
  \]
\end{remark}


\begin{notation}
  \begin{enumerate}
  \item We denote by $\FinPos$\smnote{I propose yet another notation
      for the categories of finitary endofunctors and monads.}
    the enriched category of finitary enriched endofunctors on
    $\Pos$. In the case where $\Vcat = \Set$, these are all
    endofunctors preserving filtered colimits. For $\Vcat = \Pos,$
    these are all locally monotone endofunctors preserving filtered
    colimits.
  \item The category of finitary enriched monads on $\Pos$ is denoted
    by $\Mndf(\Pos)$. We have a forgetful functor $U\colon \Mndf(\Pos)
    \to \FinPos$.   
  \end{enumerate}
\end{notation}

\noindent By precomposing endofunctors with the non-full embedding
$J\colon{\mid}\Pos_f{\mid}\rightarrow\Pos$ we obtain a forgetful
functor from $\FinPos$ to $\Sig$. It has a left adjoint\smnote{Kelly
  and Power did not notice that because they never work with
  endofunctors.  I think they go right from signatures to monads.}
assigning to every signature~$\Sigma$ the \emph{polynomial functor}
$P_\Sigma$\smnote{I proposed and Jirka agreed to change $H_\Sigma$ to
  $P_\Sigma$ in this section.} given on objects by
\begin{equation}\label{eq:KPpoly}\textstyle
  P_\Sigma X = \coprod_{\Gamma \in \Posf} \Pos(\Gamma, X) \otimes \Sigma_\Gamma,
\end{equation}
and similarly on morphisms. As previously explained, the hom-object
$\Pos(\Gamma,X)$ can have one of the two meanings: for $\V = \Set$
this is regarded as a set and for $\V = \Pos$ as a poset. Henceforth,
we will use that notation for hom-objects only in the latter case and
write
\[
  \Posz(\Gamma,X)
\]
for the set of monotone maps. 
\begin{observation}\label{O:twoenrichments}
  The usual category of algebras for the functor $P_{\Sigma}$, whose
  objects are posets $A$ with a monotone map
  $\alpha\colon P_{\Sigma}A\to A$, has the following form for our two
  enrichements:
  \begin{enumerate}
  \item Let $\Vcat = \Set$. Then $\alpha$ as above  is a monotone map 
    \[\textstyle
      \coprod_{\Gamma\in \Posf}\coprod_{u\in\Posz(\Gamma, A)}\Sigma_{\Gamma}\to A,
    \]
    and as such has components assigning to every monotone function
    $u\colon\Gamma\rightarrow A$ (that is, a monotone interpretation of the
    variables in~$\Gamma$) a monotone function
    $\Sigma_{\Gamma}\to A$. We denote this function by
    $\sigma\mapsto\sigma_A(u)$.
  
    In other words, the poset $A$ is equipped with operations
    $\sigma_A\colon \Posz(\Gamma, A)\to A$ (which need not be monotone
    since $\Posz(\Gamma, A$) is just a set) satisfying
    $\sigma_A(u) \leq \tau_A(u)$ for all pairs $\sigma \leq \tau$ in
    $\Sigma_\Gamma$ and $u$ in $\Pos(\Gamma, A)$. If $\Sigma$ is
    discrete, this is precisely a
    $\Sigma$-algebra (see the \hyperref[S:intro]{introduction}).
  
  \item Now let $\Vcat=\Pos$. Then $\alpha \colon P_{\Sigma}A\to A$ is a monotone map
    \[\textstyle
      \coprod_{\Gamma \in \Posf}\Pos(\Gamma, A)\times\Sigma_{\Gamma}\to A,
    \]
    and thus has as components monotone functions
    $(u, \sigma)\mapsto\sigma_A(u).$ That is, in addition to the
    condition that $\sigma_A(u) \leq \tau_A(u)$ for all pairs
    $\sigma \leq \tau$ in $\Sigma_\Gamma$ and $u$ in $\Pos(\Gamma, A)$
    as above, we also see that each $\sigma_A$ is monotone. Thus, if
    $\Sigma$ is discrete, this is
    precisely a coherent algebra (again, see the \hyperref[S:intro]{introduction}).
  \end{enumerate}
\end{observation}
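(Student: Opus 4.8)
The plan is to read off the data of a $P_\Sigma$-algebra directly from the defining formula~\eqref{eq:KPpoly} for $P_\Sigma$ together with the two concrete descriptions of the tensor recalled in \autoref{R:tensor}. A $P_\Sigma$-algebra is a poset $A$ equipped with a monotone map $\alpha\colon P_\Sigma A\to A$; since $P_\Sigma A = \coprod_{\Gamma\in\Posf}\Pos(\Gamma,A)\otimes\Sigma_\Gamma$ and coproducts in $\Pos$ are formed as in $\Set$ (disjoint union with the evident order, no new comparisons across summands), giving $\alpha$ amounts to giving a family of monotone maps $\alpha_\Gamma\colon\Pos(\Gamma,A)\otimes\Sigma_\Gamma\to A$, one for each context $\Gamma\in\Posf$. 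It then remains to unwind $\alpha_\Gamma$ in each of the two enrichments, which is where the respective tensor formula enters.

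For $\Vcat=\Set$ the hom $\Pos(\Gamma,A)$ is the set $\Posz(\Gamma,A)$ and the tensor is the copower $\coprod_{u\in\Posz(\Gamma,A)}\Sigma_\Gamma$, so $\alpha_\Gamma$ is a monotone map out of this coproduct, i.e.~a family, indexed by monotone interpretations $u\colon\Gamma\to A$, of monotone maps $\Sigma_\Gamma\to A$. Writing the value at $\sigma\in\Sigma_\Gamma$ as $\sigma_A(u)$, monotonicity of each of these maps is exactly the requirement $\sigma_A(u)\le\tau_A(u)$ whenever $\sigma\le\tau$ in $\Sigma_\Gamma$, and there is no further constraint because the indexing set $\Posz(\Gamma,A)$ carries no order. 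When $\Sigma_\Gamma$ is discrete this data degenerates to a single function $\sigma_A\colon\Posz(\Gamma,A)\to A$ for each $\sigma$, which is precisely the notion of $\Sigma$-algebra from the \hyperref[S:intro]{introduction}.

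For $\Vcat=\Pos$ the tensor is instead the product of posets $\Pos(\Gamma,A)\times\Sigma_\Gamma$, so $\alpha_\Gamma$ is a monotone map $(u,\sigma)\mapsto\sigma_A(u)$ on this product. The only point requiring a word is that a map out of a product of posets is monotone if and only if it is monotone in each argument separately: one direction is immediate, and for the other one factors an inequality $(u,\sigma)\le(u',\sigma')$ through $(u',\sigma)$. Hence monotonicity of $\alpha_\Gamma$ splits into the same inequational condition $\sigma\le\tau\Rightarrow\sigma_A(u)\le\tau_A(u)$ as before, now together with monotonicity of each $\sigma_A$ in $u$; in the discrete case only the latter survives, yielding exactly a coherent algebra. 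Everything here is a direct unwinding of definitions, so there is no genuine obstacle; the only thing to be careful about is keeping the two readings of the symbol $\Pos(\Gamma,X)$ apart and invoking the matching tensor formula from \autoref{R:tensor} in each case.
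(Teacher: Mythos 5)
Your proposal is correct and matches the paper's treatment: the Observation is stated there as a direct unwinding of the formula~\eqref{eq:KPpoly} via the two tensor descriptions of \autoref{R:tensor}, with no separate proof given, and your argument carries out exactly that unwinding. The only point you add beyond what the paper leaves implicit is the (standard and correctly argued) equivalence of joint and separate monotonicity for maps out of a product of posets.
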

\noindent Observe also that `homomorphism' has the usual meaning: a
monotone function preserving the given operations. In fact, given
algebras $\alpha\colon P_\Sigma A\to A$ and
$\beta\colon P_\Sigma B\to B$ a homomorphism is a monotone function
$f\colon A\to B$ such that $f\cdot\alpha=\beta\cdot P_\Sigma f$.\smnote{Let
  us please use cdot for composition everywhere; not juxtaposition,
  which makes things hard to read.}  This is equivalent to
$f(\sigma_A(u))=\sigma_B(f\cdot u)$ for all $u\in\Pos(\Gamma, A)$ and
all $\sigma\in\Sigma_{\Gamma}$.
\begin{remark}
  \begin{enumerate}
  \item As shown by Trnkov\'{a} et al.~\cite{TrnkovaEA75} (see also
    Kelly~\cite{Kelly80})\smnote{I think it's better to keep also the
      citation of Kelly because our paper is for John} every
    ordinary finitary endofunctor $H$ on $\Pos$ generates a free monad
    whose underlying functor $\widehat{H}$ is a colimit of the
    $\omega$-chain
      \[
      \widehat{H}=\mathsf{colim}_{n<\omega} W_n
      \]
      of functors, where
      \[
      W_0=\mathsf{Id}\qquad\text{and}\qquad W_{n+1}=HW_n+\mathsf{Id}
      \]
      Connecting morphisms are
      $w_0\colon\mathsf{Id}\to H+\mathsf{Id},$ the coproduct
      injection, and $w_{n+1}=Hw_n+\mathsf{Id}$. The colimit
      injections $c_n\colon W_n X \to \widehat{H}X$ in $\Pos$ have the
      property that if a parallel pair $u,v\colon \widehat{H}X \to A$
      satisfies $c_n \cdot u \leq c_n \cdot v$ for all $n < \omega$,
      then we have $u \leq v$. It follows that $\hat{H}$ is enriched
      if $H$ is.
      
  \item The category of $H$-algebras is isomorphic to the Eilenberg-Moore category 
    $\Pos^{\widehat{H}}$~\cite{Bar70}.
    
  \item Lack~\cite{Lac99} shows that the forgetful functor
    \[
      \Mndf(\Pos)
      \xra{U} \FinPos
      \xra{J} \Sig
    \]
    is monadic. The corresponding monad $\M$ on $\Sig$ assigns to
    every signature $\Sigma$ the signature $\wh{P_\Sigma} \cdot
    J\colon |\Posf| \to \Pos$.
    
  \item It follows that every enriched finitary monad $\T$ on $\Pos$
    can be regarded as an algebra for the monad $\M$. Therefore,
    $\T$ is a coequalizer in $\Mndf(\Pos)$ of a parallel pair of
    monad morphisms between free $\M$-algebras on
    signatures $\Delta,\Sigma$:
    \[
      \begin{tikzcd}
        \wh{P_\Delta}
        \arrow[shift left]{r}{\ell}
        \arrow[shift right]{r}[swap]{r}
        &
        \wh{P_\Sigma}
        \arrow{r}{c}
        &
        \T.
      \end{tikzcd}
    \]
    This is the equational presentation of $\T$ considered by Kelly
    and Lack~\cite{KL93}. 
  \end{enumerate}
\end{remark}
\begin{example}
  \begin{enumerate}
  \item In the case where $\Vcat = \Set$ and $\C = \Pos$,
    $\Mndf(\Pos)$ is the category of (non-enriched) finitary monads on
    $\Pos$. Consider the above coequalizer in the special case that
    $\Delta$ consists of a single operation $\delta$ of context
    $\Gamma$.  That is, $\Delta_{\Gamma}=\{\delta\}$ and all
    $\Delta_{\bar{\Gamma}}$ for $\bar{\Gamma}\neq\Gamma$ are empty. By
    the Yoneda lemma, $l$ and $r$ simply choose two elements of
    $\widehat{H}_{\Sigma}\Gamma$, say $t_l$ and $t_r$. The above
    coequalizer means that $\T$ is presented by the signature $\Sigma$
    and the equation $t_l=t_r$\lsnote{replaced $x$ with~$t$}.
    \newline\indent For $\Delta$ arbitrary, we do not get one
    equation, but a set of equations (one for every operation symbol
    in $\Delta$) and $\T$ is presented by $\Sigma$ and the
    corresponding set of equations, grouped by their respective
    contexts.
  
  \item
  The case $\Vcat=\C=\Pos$ yields as $\Mndf(\Pos)$ the category of enriched finitary 
  monads on $\Pos$. That is, the underlying endofunctor $T$ is locally monotone.
  \end{enumerate}
\end{example}
\begin{remark}
  The fact that every finitary (possibly enriched) monad on $\Pos$ has
  an \emph{equational} presentation depends heavily on the fact that
  signatures are not reduced to the discrete ones. In contrast, we make
  do with discrete signatures in the rest of the paper, and then
  obtain a characterization of finitary (possibly enriched) monads
  using \emph{inequational} presentations. While it is clear that the
  two specification formats are mutually convertible, inequational
  presentations seem natural for varieties of algebras
  on~$\Pos$.\lsnote{Reworded this to make the distinction clearer
    (also, everyone please lose the phrase `in our paper'); is this
    still diplomatic enough? JA: Yes.}

  Of course, it is possible to translate $\Sigma$-algebras for
  non-discrete signatures $\Sigma$ as varieties of algebras for
  discrete ones (see \autoref{ex:var}\ref{ex:var:7}). Using the result
  of Kelly and Power, such a translation would lead to a
  correspondence between finitary monads and varieties. This paper can
  be viewed as a detailed realization of this.
\end{remark}

\section{Varieties of Ordered Algebras}

Recall that $\Posf$ is a fixed set of finite posets that represent
all finite posets up to isomorphism. If $\Gamma\in\Posf$ has the
underlying set $\{x_0,\dots, x_{n-1}\}$, then we call the~$x_i$ the
\emph{variables} of~$\Gamma$. Recall that all monotone functions from
$A$ to $B$ form a set $\Posz(A,B)$ and a poset $\Pos(A,B)$ with the
pointwise order.

\begin{notation}
  The category $\Pos$ is cartesian closed, with hom-objects
  $\Pos(X, Y)$ given by all monotone functions $X\rightarrow Y$,
  ordered pointwise. That is, given monotone functions
  $f,g\colon X\rightarrow Y$, by $f\leq g$ we mean that $f(x)\leq g(x)$
  for all $x\in X$.

 We denote by $|X|$ the underlying set of a poset $X$. We also often
  consider $|X|$ to be the discrete poset on that set.
\end{notation}

\begin{definition}\label{D:sig}
  A \emph{signature in context}\lsnote{This term is not ideal} is a
  set $\Sigma$ of operation symbols each with a prescribed context,
  its \emph{arity}. That is, $\Sigma$ is a collection
  $(\Sigma_{\Gamma})_{\Gamma\in\Posf}$ of sets $\Sigma_{\Gamma}$.  A
  $\Sigma$-\emph{algebra} is a poset $A$ together with, for every
  $\sigma\in\Sigma_{\Gamma}$, a function
  \[
   \sigma_A\colon \Posz(\Gamma, A)\rightarrow A.
  \]
  That is,~$\sigma_A$ assigns to every monotone valuation
  $f\colon \Gamma\rightarrow A$ of the variables in $\Gamma$ an
  element $\sigma_A(f)$ of $A$. The algebra $A$ is called
  \emph{coherent} if each $\sigma_A$ is monotone, i.e.\ whenever
  $f\leq g$ in $\Pos(\Gamma, A)$, then
  $\sigma_A(f)\leq \sigma_A(g)$.
\end{definition}
\begin{notation}
  We denote by $\AlgSigma$ the category of $\Sigma$-algebras. Its
  morphisms $A\to B$ are the \emph{homomorphisms} in the expected
  sense; i.e.\ they are monotone functions $h\colon A\rightarrow B$
  such that for every context $\Gamma$ and every operation symbol
  $\sigma\in\Sigma_{\Gamma}$, the square 
  \[
    \begin{tikzcd}
      {\Posz(\Gamma, A)}
      \arrow[d, "h\cdot (-)"']
      \arrow[r, "\sigma_A"]
      &
      A \arrow[d, "h"]
      \\
      {\Posz(\Gamma, B)}
      \arrow[r, "\sigma_B"]
      &
      B
    \end{tikzcd}
  \]
  commutes. Similarly, we have the category $\AlgcSigma$ of all
  coherent $\Sigma$-algebras. For their homomorphisms we have the
  commutative squares
  \[
    \begin{tikzcd}
      {\Pos(\Gamma, A)} \
      \arrow{d}[swap]{h\cdot (-)}
      \arrow[r, "\sigma_A"]
      &
      A \arrow[d, "h"]
      \\
      {\Pos(\Gamma, B)}
      \arrow[r, "\sigma_B"]
      &
      B                
    \end{tikzcd}
  \]
\end{notation}
\begin{example}\label{E:lin}
  Let $\Sigma$ be the signature given by
  \[
    \Sigma_{\mathbbm{2}}=\{+\}\quad\text{and}\quad \Sigma_{\mathbbm{1}}=\{@\},
  \]
  where $\mathbbm{2}$ is a $2$-chain and $\mathbbm{1}$ is a
  singleton. A $\Sigma$-algebra consists of a poset $A$ with a (not
  necessarily monotone) unary operation $@_A$ and a partial binary
  operation $+_A$ whose definition domain is formed by all comparable
  pairs. Moreover, $A$ is coherent iff both $@_A$ and $+_A$ are
  monotone, the latter in the sense that $a+a'\leq b+b'$ whenever
  $a\leq a', b\leq b', a\leq b$, and $a'\leq b'$.
\end{example}
\takeout{
\smnote{Since this is a paper devoted to John, I strongly propose
  \emph{not} to put the following discussion in numbered remarks or even
  supress them but rather bring them out nicely.}%
Our notion of signature in context is inspired by Kelly and Power's
notion of a signature in an enriched locally finitely presentable
category~\cite{KP93}. In fact, our notion is a special case of their
notion instantiated in $\Pos$. That instance defines a signature in
$\Pos$ as a functor $\Sigma\colon |\Posf| \to \Pos$, where $|\Posf|$
denotes the discrete category with finite posets as
objects.\smnote{You might think that the notation $|\Posf|$ clashes
  with $|X|$. However, it's the same! Just recall that a poset $X$ is
  a special category, and the underlying set $|X|$ is the discrete
  category with objects the objects of $X$. So I feel free to overload
  this notation.}  More concretely, $\Sigma$ assigns to each finite
poset a \emph{poset} $\Sigma_\Gamma$ in lieu of a just a set as in
\autoref{D:sig}. Hence, our notion is the special case where each
$\Sigma_\Gamma$ is a discrete poset.

Following the lead of Kelly and Power further, we now explain how the
notions of $\Sigma$-algebras and coherent ones, respectively, are
instances of the same concept in the enriched setting. This requires
that we recall the notion of a copower (see e.g.~Kelly's
book~\cite[Sec.~3.7]{Kelly82}).
\begin{remark}\label{R:copower}
  \begin{enumerate}
  \item Let $\Vcat$ be a monoidal closed category, and suppose that
    $\C$ is a category enriched over $\Vcat$. The \emph{copower} of an
    object $X$ of $\C$ by an object $V$ of $\Vcat$ is an object $V
    \copower C$ with natural isomorphisms
    \[
      \C(V \copower X, Y) \cong \Vcat(V,\C(X,Y))
      \qquad\text{for every object $Y$ of $\C$}, 
    \]
    where $\Vcat(-,-)$ denotes the internal hom of $\Vcat$. 
    Copowers are frequently called \emph{tensors} and a
    $\Vcat$-category having all copowers is called
    \emph{tensored}.
    
  \item The notion of copower obviously depends on the
    enrichement. The category $\C = \Pos$ can be regarded as an
    enriched category over $\Vcat = \Set$ or over itself: $\Vcat =
    \Pos$. Therefore, we obtain two possible instances of the notion of a
    copower:
    \begin{enumerate}
    \item For $\Vcat = \Set$, the copower of $X$ by $V$ has a natural
      isomorphism $\Pos(V \copower X, Y) \cong \Set(V,
      \Pos(X,Y))$. This implies that the copower is the coproduct
      \[
        V \copower X = \coprod\nolimits_{v \in V} X,
      \]
      whence it is the usual copower.
      
    \item For $\Vcat = \Pos$, the copower of $X$ by $V$ has the
      natural isomorphisms $\Pos(V \copower X, Y) \cong \Pos(V,
      \Pos(X,Y))$. This implies that the copower is the product
      \[
        V \copower X = V \times X.
      \]
    \end{enumerate}
  \end{enumerate}
\end{remark}

Every signature $\Sigma\colon |\Posf| \to \Pos$ (in the sense of Kelly
and Power) gives rise to a polynomial functor on $\Pos$ canonically
obtained by forming the left Kan-extension of $\Sigma$ along the
embedding $J\colon |\Posf| \subto \Pos$:
\[
  \Lan_J \Sigma.
\]
In this case, the usual coend formula for computing left
Kan-extensions~\cite[Thm.~X.4.1]{MacLane98} simplifies to a coproduct. 
Hence, the polynomial functor associated to $\Sigma$ is defined on
objects by
\[
   X \mapsto \coprod\nolimits_{\Gamma \in \Posf} \Pos(\Gamma, X) \copower \Sigma_\Gamma
\]
and similarly on morphisms. In the above formula $\Pos(\Gamma, X)$ can
mean a set or a poset and $\bullet$ can be one of the two copowers we
mentioned in \autoref{R:copower} depending on which enrichment on
$\Pos$ one would like to consider. Moreover, we shall now explain
that by instantiating the copower with those two variants we obtain
that the algebras for the above functor are precisely all
$\Sigma$-algebras or all coherent ones, respectively.
}
\noindent Similarly to the more general signatures discussed in
\autoref{S:eqpres}, signatures~$\Sigma$ in our present sense can be
represented as polynomial functors~$H_\Sigma$ (for $\Sigma$-algebras)
and $K_\Sigma$ (for coherent $\Sigma$-algebras), respectively,
introduced next. These functors arise by specializing the
corresponding instances of the polynomial functor $P_\Sigma$ according
to \autoref{O:twoenrichments} to discrete signatures.
\begin{notation}
  The \emph{polynomial} and \emph{coherent polynomial} functors for a
  signature $\Sigma$ are the endofunctors
  $H_{\Sigma}\colon \Pos \to \Pos$ and $K_\Sigma\colon \Pos \to \Pos$ given by
  \[
    H_\Sigma X = \coprod\nolimits_{\Gamma \in \Posf} \Sigma_\Gamma \times \Posz(\Gamma, X)
    \qquad\text{and}\qquad
    K_\Sigma X = \coprod\nolimits_{\Gamma \in \Posf} \Sigma_\Gamma  \times \Pos(\Gamma, X),
  \]
  respectively, where we regard the sets $\Sigma_\Gamma$ and
  $\Posz(\Gamma,X)$ as discrete posets.  Thus, the elements of both
  $H_\Sigma X$ and $K_\Sigma X$ are pairs $(\sigma, f)$ where $\sigma$
  is an operation symbol of arity~$\Gamma$ and
  $f\colon \Gamma\rightarrow X$ is monotone. The action on
  monotone maps $h\colon X\rightarrow Y$ is then the same for both
  functors:
  \[
    H_{\Sigma}h(\sigma, f) = (\sigma, h\cdot f) = K_\Sigma h(\sigma,f).
  \]
\end{notation}
\begin{remark}
  \begin{enumerate}
  \item \lsnote{Quite honestly I believe these considerations are
      better discharged by a two-line statement accompanied by the
      admittedly dangerous word `clearly' ;) }
      \cfnote{I almost agree with the above: that these categories are isomorphic \emph{is}
      clear; I don't mind leaving the details on concreteness of the iso though. }
      Every $\Sigma$-algebra
    $A$ induces an $H_\Sigma$-algebra
    $\alpha\colon H_{\Sigma}A\rightarrow A$ given by
    \[
      \alpha(\sigma, f)=\sigma_A(f) 
      \qquad
      \text{for $\sigma \in \Sigma_\Gamma$ and $f \in \Posz(\Gamma,X)$.}
    \]
    Conversely, every $H_{\Sigma}$-algebra
    $\alpha\colon H_{\Sigma}A\rightarrow A$ can be viewed as a
    $\Sigma$-algebra, putting $\sigma_A(f)=\alpha(\sigma, f)$. More
    conceptually, we have bijective correspondences between the
    following (families of) maps:
    \[
      \begin{array}{r@{\,}l@{\quad}l}
        \alpha\colon & H_\Sigma A \to A
        \rule[-6pt]{0pt}{0pt}
        \\
        \hline
        \alpha_\Gamma\colon & 
        \Sigma_\Gamma \times \Posz(\Gamma, A) \to A & (\Gamma \in \Posf)
        \rule[-8pt]{0pt}{22pt}
        \\
        \hline
        \sigma_A\colon & \Posz(\Gamma, A) \to A
        & (\Gamma\in \Posf, \sigma \in \Sigma_\Gamma)
        \rule{0pt}{14pt}
      \end{array}
    \]
    Thus, $\AlgSigma$ is isomorphic to the category $\Alg H_\Sigma$ of
    algebras for $H_{\Sigma}$ whose morphisms from $(A, \alpha)$ to
    $(B, \beta)$ are those monotone maps $h\colon A\rightarrow B$ for
    which the square below commutes:
    \[
      \begin{tikzcd}
        H_{\Sigma}A
        \arrow[d, "H_{\Sigma}h"']
        \arrow[r, "\alpha"]
        &
        A
        \arrow[d, "h"]
        \\
        H_{\Sigma}B
        \arrow[r, "\beta"]
        &
        B                
      \end{tikzcd}
    \]
    Indeed, this is equivalent to $h$ being a homomorphism of
    $\Sigma$-algebras. Shortly,
    \[
      \AlgSigma\cong\Alg H_\Sigma.
    \]
    Moreover, this isomorphism is concrete, i.e.~it preserves the
    underlying posets (and monotone maps). That is, if
    $U\colon\AlgSigma\rightarrow\Pos$ and
    $\bar{U}\colon\Alg H_\Sigma\rightarrow\Pos$ denote the forgetful
    functors, the above isomorphism
    $I\colon \AlgSigma\rightarrow\Alg H_\Sigma$ makes the following
    triangle commutative:
    \[  
      \begin{tikzcd}[column sep = 10]
        \AlgSigma
        \arrow[rr, "I"]
        \arrow[rd, "U"']
        &&
        \Alg H_\Sigma
        \arrow[ld, "\bar{U}"]
        \\
        &
        \Pos                                               
      \end{tikzcd}
    \]
  \item Similarly, every coherent $\Sigma$-algebra defines an algebra
    for $K_\Sigma$, and conversely. Indeed, giving an algebra structure
    $\alpha\colon K_\Sigma A \to A$ is to give a context-indexed
    family of monotone maps
    \[
      \alpha_\Gamma\colon \Sigma_\Gamma \times \Pos(\Gamma, A) \to A.
    \]
    Equivalently, we have for every $\sigma$ of arity $\Gamma$ a
    monotone map $\sigma_A\colon \Pos(\Gamma, A) \to A$.

    This leads to an isomorphism $\AlgcSigma \cong \Alg K_\Sigma$,
    which is concrete:
    \[
      \begin{tikzcd}[column sep = 10]
        \AlgcSigma
        \arrow[rr, "I_c"]
        \arrow[rd, "U_c"']
        &&
        \Alg K_\Sigma
        \arrow[ld, "\bar{U}_c"]
        \\
        &
        \Pos                                               
      \end{tikzcd}
    \]
    where $I_c$, $U_c$ and $\bar U_c$ denote the isomorphism and the
    forgetful functors, respectively.
  \end{enumerate}
\end{remark}
\begin{remark}\label{R:fact}
  \lsnote{Do we still need this given the new proof of reflexivity?}
  Recall that epimorphisms in $\Pos$ are precisely the surjective
  monotone maps.  $\Pos$ has the factorization system\lsnote{We should
    decide at some point how much category theory we want to assume.}
  \[
    (\text{epimorphism}, \text{embedding})
  \]
  where \emph{embeddings} are maps $m\colon A\rightarrow B$ such that
  for all $a, a'\in A$ we have $a\leq a'$ iff $m(a)\leq m(a')$. That
  is, embeddings are order-reflecting monotone
  functions.\lsnote{Should we add a categorical description of what
    embeddings are, such as regular monos?}
    \cfnote{It certainly couldn't hurt to add this in a sentence in my opinion}

Given an $\omega$-chain of embeddings in $\Pos$, its colimit is simply their union (with inclusion maps as the colimit cocone). 
\end{remark}

\begin{proposition}\label{P:free}
  Every poset $X$ generates a free $\Sigma$-algebra $\TSigma X$. Its
  underlying poset is the union of the following $\omega$-chain of
  embeddings in $\Pos$:
  \begin{equation}\label{eq:chain}
    W_0= X
    \xra{w_0}
    W_1 = H_{\Sigma}X + X
    \xra{w_1}
    W_2 = H_{\Sigma}W_1 + X 
    \xra{w_3}
    \cdots
  \end{equation}
  where $w_0$ is the right-hand coproduct injection
  $X \to H_\Sigma X + X$ and
  $w_{n+1} = Hw_n + \id_X \colon W_{n+1} = H_\Sigma W_n + X \to
  HW_{n+1} + X = W_{n+2}$ for every $n$. The universal map
  $\eta_X\colon X \to \TSigma X$ is the inclusion of~$W_0$ into the
  union.
\end{proposition}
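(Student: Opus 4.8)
The plan is to reduce to algebras for the polynomial functor $H_\Sigma$ and then invoke the classical construction of free algebras for a finitary endofunctor on $\Pos$. Since $\AlgSigma\cong\Alg H_\Sigma$ concretely (by the preceding remark), it is enough to produce free $H_\Sigma$-algebras with the stated underlying poset. There are three points to settle: (i)~that $H_\Sigma$ is finitary, so that the free-monad construction recalled above (after Trnková et al.~\cite{TrnkovaEA75}, Kelly~\cite{Kelly80}, Barr~\cite{Bar70}) applies; (ii)~that the chain~\eqref{eq:chain} is exactly the chain $(W_nX)_{n<\omega}$ of that construction, with $W_0=\mathsf{Id}$ and $W_{n+1}=H_\Sigma W_n+\mathsf{Id}$; and (iii)~that the connecting maps $w_n$ are embeddings, which by \autoref{R:fact} identifies the colimit with the union $\bigcup_{n<\omega}W_n$, the inclusions forming the colimit cocone.

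For (i), I would observe that the set-valued functor $X\mapsto\coprod_{\Gamma\in\Posf}\Sigma_\Gamma\times\Posz(\Gamma,X)$ preserves filtered colimits — each finite poset $\Gamma$ is finitely presentable in $\Pos$, so $\Posz(\Gamma,-)$ preserves filtered colimits, and coproducts in $\Set$ commute with filtered colimits — and that $H_\Sigma$ is obtained by postcomposing with the discrete-poset functor $\Set\to\Pos$, a left adjoint, hence cocontinuous. So $H_\Sigma$ preserves filtered colimits. Consequently $H_\Sigma$ generates a free monad $\wh{H_\Sigma}=\colim_{n<\omega}W_n$, the free $H_\Sigma$-algebra on $X$ has underlying poset $\wh{H_\Sigma}X=\colim_{n<\omega}W_nX$, and its universal arrow is the colimit injection $c_0\colon W_0X=X\to\wh{H_\Sigma}X$. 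Point (ii) is then just unwinding definitions.

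For (iii), $w_0$ is a coproduct injection, hence an embedding (the coproduct of posets puts no relations between the summands), and inductively $w_{n+1}=H_\Sigma w_n+\id_X$ is an embedding because coproducts of embeddings are embeddings in $\Pos$ and $H_\Sigma w_n$ is an embedding: the posets $H_\Sigma W_n$ and $H_\Sigma W_{n+1}$ are discrete, $H_\Sigma w_n$ sends $(\sigma,f)$ to $(\sigma,w_n\cdot f)$ and is therefore injective (as $w_n$, being an embedding, is monic), and any injective monotone map between discrete posets is order-reflecting. By \autoref{R:fact} the colimit of~\eqref{eq:chain} is the union $\bigcup_{n<\omega}W_n$ with the inclusions as cocone; this is the underlying poset of $\TSigma X:=\wh{H_\Sigma}X$, and $\eta_X=c_0$ is the inclusion of $W_0=X$. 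Transporting along $\AlgSigma\cong\Alg H_\Sigma$ yields the free $\Sigma$-algebra on $X$.

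I do not anticipate a genuine difficulty: the one place to be careful is (iii), because the statement asserts that the free algebra is literally the union of an increasing chain of subposets, not merely a quotient of their disjoint union — this is exactly what the embedding property buys, via \autoref{R:fact}. Everything else is either bookkeeping or a direct appeal to the recalled results. Should one wish to avoid citing the free-monad construction, the universal property can also be checked by hand: given an $H_\Sigma$-algebra $(A,a)$ and a monotone map $g\colon X\to A$, put $g_0=g$ and $g_{n+1}=[\,a\cdot H_\Sigma g_n,\ g\,]\colon W_{n+1}\to A$; these are compatible with the $w_n$, hence glue to a monotone $g^\#\colon\TSigma X\to A$ by the union property, and $g^\#$ is then seen to be the unique $\Sigma$-homomorphism extending $g$ — uniqueness using the order-separation property of the colimit injections $c_n$ noted in the recalled remark.
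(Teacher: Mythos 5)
Your proposal is correct and follows essentially the same route as the paper's proof: show that $H_\Sigma$ is finitary (via the hom-functors $\Posz(\Gamma,-)$ for finite $\Gamma$), invoke the classical free-algebra chain construction for finitary endofunctors, and transport the result along the concrete isomorphism $\AlgSigma\cong\Alg H_\Sigma$. You additionally spell out details the paper leaves implicit -- notably that the connecting maps $w_n$ are embeddings, so the colimit is literally the union as per \autoref{R:fact} -- but this is elaboration rather than a different argument.
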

\begin{proof}
  Observe first that the polynomial functor $H_\Sigma$ can be
  rewritten, up to natural isomorphism, as
  \[
    H_\Sigma X \cong \coprod\nolimits_{\Gamma \in \Posf}
    \coprod\nolimits_{\Sigma_\Gamma} \Posz(\Gamma, X),
  \]
  because every $\Sigma_\Gamma$ is discrete.  It follows that
  $H_\Sigma$ is finitary, being a coproduct of functors
  $\Posz(\Gamma, -)$ (each $\Posz(\Gamma, -)$ is finitary because
  $\Gamma$ is finite). 
  It follows that the free $H_{\Sigma}$-algebra over~$X$ is the
  colimit of the $\omega$-chain $(W_n)$ from~\eqref{eq:chain} in
  $\Pos$, where $W_0= X$ and $W_{n+1}=H_{\Sigma}W_n + X$ with connecting
  maps~$w_n$ as described ~\cite{Ada74}.\smnote{The citation should be
    deleted here because it indicates where this fact is proved, not
    where the chain is defined, which is above} The desired result
  thus follows from the concrete isomorphism
  $\AlgSigma\cong \Alg H_{\Sigma}$.
\end{proof}
\noindent A similar result can be proved for coherent
$\Sigma$-algebras and the associated functor $K_\Sigma$, using the
fact that like $\Posz(\Gamma,-)$, also the internal hom-functor
$\Pos(\Gamma,-)$ is finitary:
\begin{proposition}\label{P:freec}
  Every poset $X$ generates a free coherent $\Sigma$-algebra $\TcSigma X$. Its
  underlying poset is the union of the following $\omega$-chain of
  embeddings in $\Pos$:
  \[
    W_0= X
    \xra{w_0}
    W_1 = K_{\Sigma}X + X
    \xra{w_1}
    W_2 = K_{\Sigma}W_1 + X 
    \xra{w_3}
    \cdots
  \]
  The universal morphism $\eta_X\colon X \to \TcSigma X$ is the
  inclusion of~$W_0$ into the union.
\end{proposition}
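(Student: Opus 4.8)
The plan is to mimic the proof of \autoref{P:free}, replacing the polynomial functor $H_\Sigma$ by the coherent polynomial functor $K_\Sigma$ and the concrete isomorphism $\AlgSigma \cong \Alg H_\Sigma$ by $\AlgcSigma \cong \Alg K_\Sigma$. First I would rewrite $K_\Sigma$, up to natural isomorphism, as
\[
  K_\Sigma X \cong \coprod\nolimits_{\Gamma \in \Posf}\coprod\nolimits_{\Sigma_\Gamma} \Pos(\Gamma, X),
\]
using that each $\Sigma_\Gamma$ is discrete. Since each $\Gamma$ is finite, the internal hom-functor $\Pos(\Gamma, -)$ preserves filtered colimits (this is the one genuinely order-theoretic ingredient, already announced in the paragraph preceding the statement; it follows from the explicit description of filtered colimits in $\Pos$, a monotone map out of the finite poset $\Gamma$ together with the finitely many order relations it must respect being detected at a single stage of a filtered diagram). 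As a coproduct of finitary functors, $K_\Sigma$ is therefore finitary.

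Next I would invoke the standard free-algebra construction for a finitary endofunctor~\cite{Ada74}: the free $K_\Sigma$-algebra on $X$ is the colimit in $\Pos$ of the $\omega$-chain $(W_n)$ displayed in the statement, with $W_0 = X$, $W_{n+1} = K_\Sigma W_n + X$, connecting maps $w_0$ the right-hand coproduct injection and $w_{n+1} = K_\Sigma w_n + \id_X$. To identify this colimit with the \emph{union} as claimed, it suffices by \autoref{R:fact} to check that every $w_n$ is an embedding, which goes by induction: $w_0$ is a coproduct injection, hence an embedding; if $w_n$ is an embedding then so is $K_\Sigma w_n$, since $\Pos(\Gamma, -)$ carries order-reflecting maps to order-reflecting maps (the order on hom-posets being pointwise) and coproducts of embeddings are embeddings; and finally $K_\Sigma w_n + \id_X$ is an embedding because $+$ preserves embeddings.

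Finally, transporting along the concrete isomorphism $\AlgcSigma \cong \Alg K_\Sigma$ shows that the union $\TcSigma X$ of the chain, equipped with the coherent $\Sigma$-algebra structure read off from the induced $K_\Sigma$-algebra structure on the colimit, is the free coherent $\Sigma$-algebra on $X$, with universal morphism $\eta_X$ the inclusion of $W_0$. I expect no serious obstacle: the only points needing a word of justification are the finitarity of $\Pos(\Gamma, -)$ and the preservation of embeddings by $K_\Sigma$; everything else is formal and runs exactly parallel to \autoref{P:free}.
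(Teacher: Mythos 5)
Your proposal is correct and follows essentially the same route as the paper, which explicitly discharges \autoref{P:freec} as ``a similar result \ldots using the fact that like $\Posz(\Gamma,-)$, also the internal hom-functor $\Pos(\Gamma,-)$ is finitary'', i.e.\ by rerunning the proof of \autoref{P:free} with $K_\Sigma$ in place of $H_\Sigma$ and the concrete isomorphism $\AlgcSigma\cong\Alg K_\Sigma$ in place of $\AlgSigma\cong\Alg H_\Sigma$. Your added verifications (finitarity of $\Pos(\Gamma,-)$ and preservation of embeddings along the chain) are correct and merely make explicit what the paper leaves implicit.
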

\takeout{
\begin{definition}
  A \emph{term} in context $\Gamma$ is an element of the poset
  $\TSigma(\Gamma)$. We denote the ordering on terms by $\sqsubseteq$.
\end{definition}
\begin{remark}\label{E:term}
  Explicitly, for a poset~$X$, terms in $\TSigma X$ and their ordering
  $\sqsubseteq$ are generated inductively by the following rules:
  \begin{itemize}
  \item Every variable $x\in X$ is a term.
  \item If $x\le y$ for variables $x,y\in X$, then $x\sqsubseteq y$
    for the corresponding terms.
  \item If $\sigma\in\Sigma$ has arity~$\Gamma$ and
    $f\colon \Gamma\to \TSigma X$ is monotone, then $\sigma(f)$ is a
    term.
  \item Any term $\sigma(f)$ as per the previous item is comparable to
    itself.
  \end{itemize}
  Of course, the operation $\sigma_{\TSigma X}$ of $\TSigma X$ for
  $\sigma\in\Sigma_{\Gamma}$ assigns to
  $f\colon \Gamma\rightarrow\TSigma X$ the value $\sigma(f)$.

  The above description implies that there are rather fewer terms in
  $\TSigma X$ than one would maybe expect: By the above clauses, we
  have $t\sqsubseteq s$ for terms $t,s\in\TSigma X$ iff either $t=s$
  or $t=x,s=y$ for variables $x,y\in X$ such that $x\le
  y$. Consequently, whenever an operation $\sigma\in\Sigma_\Gamma$ has
  nondiscrete arity, say $x\le y$ for distinct $x,y\in\Gamma$, then
  terms of the form $\sigma(f)$ with~$\sigma,f$ as above exist in
  $\TSigma X$ only if either $f(x)=f(y)$, or both $f(x)$ and $f(y)$
  are variables in~$X$, and $f(x)\le f(y)$ in~$X$. For instance,
  for~$\Sigma$ being the signature of \autoref{E:lin}, all terms in
  context $\Gamma$ have have one of the forms
  \begin{equation*}
    x,\qquad x+y,\qquad @t,\qquad \text{or}\qquad t+t,
  \end{equation*}
  where $t$ is a term and $x,y$ are variables from $\Gamma$ such that
  $x\leq y$.

  In $\TcSigma X$, both the ordering and, consequently, the set of
  terms are larger than in $\TSigma X$, as we have the following
  additional rule for $\sqsubseteq$:
  \begin{itemize}
  \item Given $\sigma\in\Sigma_\Gamma$ and
    $f,g\colon\Gamma\to T_\Sigma(X)$, if $f\sqsubseteq g$ (pointwise)
    then $\sigma(f)\sqsubseteq\sigma(g)$.
  \end{itemize}
  E.g.\ in the signature of \autoref{E:lin}, given $x,y\in X$ such
  that $x\le y$, $\TcSigma X$ contains the term
  \begin{equation*}
    @x+@y
  \end{equation*}
  since $@x\sqsubseteq @y$ by the above rule; this term is not
  contained in $\TSigma X$.

\end{remark}

\begin{notation}\label{N:free}
  Let $A$ be a $\Sigma$-algebra. For every monotone function
  $f\colon \Gamma\rightarrow A$ (valuation of variables of $\Gamma$ in
  $A$) we denote by
  \[
    f^{\#}\colon \TSigma(\Gamma)\rightarrow A
  \]
  the corresponding homomorphism (interpretation of terms). For
  example, given $\sigma\in\Sigma_{\Gamma}$ we have
  $f^{\#}(\sigma(\eta_\Gamma))= \sigma_A(f)$.
\end{notation}
}
%
%
\begin{definition}\label{def:terms}
  We define \emph{terms} as usual in universal algebra, ignoring the
  order structure of arities; we write $\TermG$ for the set of
  $\Sigma$-terms in variables from $\Gamma$. Explicitly, the set
  $\TermG$ of terms is the least set containing~$|\Gamma|$ such
  that given an operation~$\sigma$ with arity $\Delta$ and a function
  $f\colon|\Delta|\to\TermG$, we obtain a term
  $\sigma(f)\in\TermG$.
\end{definition}

We denote by $u_\Gamma\colon \Gamma \to \TermG$ the inclusion
map.\smnote{This should \emph{not} be called $\eta$ because this is
  used as the unit of the monad $T$, which will be very confusing for
  the reader later.} We will often silently assume that the elements
of $|\Delta|$ are listed in some fixed sequence $x_1,\dots,x_n$, and
then write $\sigma(t_1, \ldots, t_n)$ in lieu of $\sigma(f)$ where
$f(x_i)=t_i$ for $i=1,\dots,n$. In particular, in examples we will
normally use arities $\Delta$ with $|\Delta|=\{1,\dots,k\}$ for
some~$k$, and then assume the elements of $\Delta$ to be listed in the
sequence $1,\dots,k$. We will often abbreviate $(t_1, \ldots, t_n)$ as
$(t_i)$, in particular writing $\sigma(t_i)$ in lieu of
$\sigma(t_1,\dots,t_n)$. Every $\sigma \in\Sigma_\Gamma$ yields the
term $\sigma(u_\Gamma)\in\TermG$, which by abuse of notation we will
occasionally write as just~$\sigma$.
\begin{example}
  Let $\Sigma$ be a signature with a single operations symbol $\sigma$
  whose arity is a $2$-chain. Then $\TermG$ is the set of usual terms
  for a binary operation on the variables from $\Gamma$. Whereas
  $\TSigma\Gamma$ contains only those terms which are variables or have the
  form $\sigma(t,t)$ for terms $t$ or $\sigma(x,y)$ for $x \leq y$ in
  $\Gamma$. The order of $\TSigma\Gamma$ is such that the only
  comparable distinct terms are the variables. 
\end{example}
\begin{definition}\label{D:sharp}
  Let $A$ be a $\Sigma$-algebra. Given a context $\Gamma$ (of
  variables) and a monotone interpretation $f\colon \Gamma \to A$, the
  \emph{evaluation map} is the partial map
  \[
    f^\#\colon \TermG \to |A|
  \]
  defined recursively by
  \begin{enumerate}
  \item $f^\#(x) = f(x)$ for every $x \in |\Gamma|$, and
  \item $f^\#(\sigma(g))$ is defined for $\sigma \in \Sigma_\Delta$
    and $g\colon|\Delta|\to \TermG$ iff all $f^\#(t_i)$ are
    defined and $i \leq j$ in $\Delta$ implies
    $f^\#(g(i)) \leq f^\#(g(j))$ in $A$; then
    $f^\#(\sigma(g)) = \sigma_A(f^\#\cdot g)$.
  \end{enumerate}
\end{definition}
\begin{example}\label{E:term}
  \begin{enumerate}
  \item For the signature in \autoref{E:lin}, we have terms in
    $\Term\{x,y\}$ such as $@x$, $y + @y$, etc. Given a
    $\Sigma$-algebra $A$ and an interpretation $f\colon\{x,y\}\to A$
    (say, with~$\{x,y\}$ ordered discretely), we see that $@x$ is
    always interpreted as $f^\#(@x)=@_A(f(x))$, whereas $f^\#(y + @x)$
    is defined if and only if $f(y) \leq @_A(f(x))$, and then
    $f^\#(y + @x)=f(y) +_A @_A(f(x))$.

  \item\label{E:term:2}
    \takeout{
    Recall from \autoref{def:terms} that every
    operation symbol $\sigma \in \Sigma_\Gamma$ defines a term
    $\sigma(x_i)$, where $|\Gamma| = \{x_1, \ldots, x_n\}$. Given any
    interpretation $f\colon \Gamma \to A$, since $f$ is monotone,
    $f^\#(\sigma(x_i))$ is defined, and we have}
  Every operation symbol $\sigma \in \Sigma_\Gamma$ considered as a
  term (see \autoref{def:terms}) satisfies
    \[
      f^\#(\sigma) = \sigma_A(f(x_i)).
    \]
  \end{enumerate}
\end{example}
\begin{definition}\label{def:ineqs}
  An \emph{inequation in context} $\Gamma$ is a pair $(s,t)$ of terms
  in $\TermG$, written in the form
  \[
    \Gamma\vdash s\leq t.
  \]
  Furthermore, we denote by
  \[
    \Gamma\vdash s = t
  \]
  the conjunction of the inequations $\Gamma\vdash s\leq t$ and
  $\Gamma\vdash t\leq s$.

  A $\Sigma$-algebra \emph{satisfies} $\Gamma \vdash s \leq t$ if for
  every monotone function $f\colon \Gamma \to A$, both $f^\#(s)$ and
  $f^\#(t)$ are defined and  $f^\#(s) \leq f^\#(t)$. 
\end{definition}
\begin{example}\label{E:lin2}
  For the signature of \autoref{E:lin}, consider the singleton context
  $\{x\}$ and the inequation
  \begin{equation}\label{Eqn:2.1}
    \{x\}\vdash x\leq @x. 
  \end{equation}
  An algebra $A$ satisfies this inequation iff $a\leq @_A(a)$ holds
  for every $a\in A$. In such algebras, the interpretation of the term
  $x+@x$ is defined everywhere. As a slightly more advanced example,
  consider the inequality (in the same signature)
  \begin{equation*}
    \{x\le y\}\vdash x + @x \le x.
  \end{equation*}
  According reading of inequalities as per Definition~\ref{def:ineqs},
  this inequality implies that $x + @x$ is always defined, which
  amounts precisely to \eqref{Eqn:2.1}.
\end{example}

\begin{definition} 
  A \emph{variety of $\Sigma$-algebras} is a full subcategory of
  $\AlgSigma$ specified by a set~$\E$ of inequations in context. We
  denote it by $\Alg(\Sigma, \E)$.\smnote{Please do not
    delete; it is used all the time.}\lsnote{Calling a set of
    \emph{in}equations `$\mathcal E$' is maybe not very suggestive, or
    rather suggestive of the wrong thing. How about $\mathcal I$ like
    in Chase's thesis?} Analogously, a \emph{variety of coherent
    $\Sigma$-algebras} is a full subcategory of $\AlgcSigma$ specified
  by a set of inequations in context.
\end{definition}

\begin{example}\label{ex:var}
We present some varieties of algebras.
\begin{enumerate}
\item We have seen a variety $\V$ specified by \eqref{Eqn:2.1} in \autoref{E:lin2}.
  
\item The subvariety of all coherent algebras in $\V$ can be specified
  as follows. Consider the 
contexts $\Gamma_1$ and $\Gamma_2$ given by
\[
  \Gamma_1 = 
  \begin{tikzcd}[sep = 30,baseline = -2]
    y
    \arrow[no head]{d}
    \\
    x
  \end{tikzcd}
  \qquad\text{and}\qquad
  \Gamma_2 = 
  \begin{tikzcd}[column sep = 10, row sep = 10, baseline=(B.base)]
                                           & y' \arrow[rd, no head] &                       \\
|[alias=B]|x' \arrow[rd, no head] \arrow[ru, no head] &                        & y \arrow[ld, no head] \\
                                           & x                      &                      
\end{tikzcd}
\]
and the inequations
\begin{equation}\label{Eqn:2.3}
  \Gamma_1\vdash @x\leq @y
  \qquad\text{and}\qquad
  \Gamma_2\vdash x+y\leq x'+y'.
\end{equation}
It is clear that $\Sigma$-algebras satisfying \eqref{Eqn:2.1} and \eqref{Eqn:2.3} form precisely the 
full subcategory of $\V$ consisting of coherent algebras.

\item\label{ex:var:3}
In general, all coherent $\Sigma$-algebras form a variety of $\Sigma$-algebras. 
For every context $\Gamma$, form the context $\bar{\Gamma}$ with variables $x$ 
and $x'$ for every variable $x$ of $\Gamma$, where the order is the least one such that 
the functions $e, e'\colon\Gamma\rightarrow\bar{\Gamma}$ given by $e(x)=x$ 
and $e'(x)=x'$ are embeddings such that $e\leq e'$. For every $\Gamma$ and 
every $\sigma\in\Sigma_{\Gamma}$ consider the following inequation in context 
$\bar{\Gamma}$:
\[\bar{\Gamma}\vdash \sigma(e)\leq\sigma(e').\]
It is satisfied by precisely those $\Sigma$-algebras $A$ for which
$\sigma_A$ is monotone.

\item\label{ex:var:4} 
  Recall that an \emph{internal semilattice} in a category with finite products
  is an object $A$ together with morphisms $+\colon A\times A\to A$ and
  $0\colon 1\to A$ such that
  \begin{enumerate}
  \item $0$ is a unit for $+$, i.e.~the following triangles
    commute\smnote{The notation was wrong here; given
      $f\colon X \to A$ and $g\colon X \to B$ the unique induced
      morphism is denoted by
      $\langle f, g\rangle\colon X \to A \times B$.}
    \[
      \begin{tikzcd}
        A \cong 1 \times A
        \arrow{r}{0 \times \id}
        \ar[equals]{rd}
        &
        A\times A
        \arrow{d}{+}
        &
        A \times 1 \cong A
        \ar{l}[swap]{\id \times 0}
        \ar[equals]{ld}
        \\
        &
        A
      \end{tikzcd}
    \]
    
  \item $+$ is associative, commutative, and idempotent:
    \[
      \begin{tikzcd}
        A \times A \times A
        \arrow{r}{+ \times \id}
        \arrow{d}[swap]{\id \times +}
        &
        A \times A
        \ar{d}{+}
        \\
        A\times A
        \ar{r}{+}
        &
        A
      \end{tikzcd}
      \qquad
      \begin{tikzcd}
        A \times A
        \ar{r}{\mathsf{swap}}
        \ar{rd}[swap]{+}
        & 
        A \times A
        \ar{d}{+}
        \\
        & A
      \end{tikzcd}
      \qquad
      \begin{tikzcd}
        A
        \ar{r}{\Delta}
        \ar[equals]{rd}
        &
        A \times A
        \ar{d}{+}
        \\
        &
        A
      \end{tikzcd}
    \]
    Here $\mathsf{swap} = \fpair{\pi_r, \pi_\ell}\colon A \times A \to A \times
    A$ is the canonical isomorphism commuting product components, and
    $\Delta = \fpair{\id,\id}\colon A \to A \times A$ is the diagonal.
  \end{enumerate}
  \noindent
  Internal semilattices in $\Pos$ form a
  variety of coherent $\Sigma$-algebras. To see this, consider the
  signature $\Sigma$ with $\Sigma_2 = \{+\}$ and
  $\Sigma_\mathbbm{\emptyset} = \{0\}$, where $2$ denotes the
  two-element discrete poset. The set $\E$ is formed by (in)equations
  specifying that $+$ is monotone, associative, commutative, and
  idempotent with unit $0$. Note that this does \emph{not} imply that
  $x + y$ is the join of $x, y$ in $X$ w.r.t.~its given order
  (cf.~\autoref{ex:freealgs}).

\item\label{ex:var:5} A related variety is that of classical
  join-semilattices (with $0$). To specify those, we take the signature $\Sigma$
  from the previous item; but now we need just two inequations
  in context specifying that $0$ and $+$ are the least element and the
  join operation, respectively:
  \[
    \{x\} \vdash 0 \leq x \qquad \{ x \leq z, y \leq z\} \vdash x+y \leq z.
  \]
  It then follows that $+$ is monotone, associative, commutative and
  idempotent, whence these equations need not be contained in $\E$. 
\item\label{item:bounded} \emph{Bounded joins:} Take the signature
  $\Sigma$ consisting of a unary operation~$\bot$ and an operation $j$
  (\emph{bounded join}) of arity $\{0,1,2\}$ where $0\le 2$ and
  $1\le 2$ (but $0\not\le 1$). We then define a variety~$\V$ by
  inequations in context
  \begin{gather*}
    x,y\vdash \bot(x)\le y\\
    x\le z, y\le z\vdash x\le j(x,y,z)\\ 
    x\le z, y\le z\vdash y\le j(x,y,z)\\
    x\le z, y\le z, x\le w, y\le w\vdash y\le j(x,y,z)\le w.
  \end{gather*}
  That is, $j(x,y,z)$ is the join of elements~$x,y$ having a joint
  upper bound~$z$. It follows that the value of $j(x,y,z)$, when it is
  defined, does not actually depend on~$z$, which instead just serves
  as a witness for boundedness of $\{x,y\}$. The operation~$\bot$ and
  its inequality specify that algebras are either empty or have a
  least element, i.e.\ the empty set has a join provided that it is
  bounded. Thus, $\V$ consists of the partial orders having all
  bounded finite joins, which we will refer to as \emph{bounded-join
    semilattices}, and morphisms in~$\V$ are monotone maps that
  preserve all existing finite joins.

\item\label{ex:var:7} Let a collection of posets $\Sigma_\Gamma$
  ($\Gamma \in \Posf$) be given. We obtain the corresponding signature
  $\Sigma^d = (|\Sigma_\Gamma|)_{\Gamma \in \Posf}$ by disregarding
  the order of $\Sigma_\Gamma$. Now consider the following set $\E$ of
  inequations in context:
  \[
    \Gamma \vdash \sigma(x_i) \leq \tau(x_i)
  \]
  where $|\Gamma| = \{x_1, \ldots, x_n\}$ and $\sigma, \tau \in
  \Sigma_\Gamma$ fulfil $\sigma \leq \tau$. Then the variety
  $\Alg(\Sigma, \E)$ is precisely the category of algebras for the
  non-discrete signature $\Sigma$ (see~\autoref{D:ndsig}). 
\end{enumerate}
\end{example}
\begin{remark}\label{R:create}
  We will now discuss limits and directed colimits in $\AlgSigma$.
  \begin{enumerate}
  \item It is easy to see that for every endofunctor $H$ on $\Pos$ the
    category $\Alg H$ of algebras for~$H$ is complete. Indeed, the
    forgetful functor $V\colon \Alg H\rightarrow\Pos$ creates
    limits. This means that for every diagram
    $D\colon\D\rightarrow \Alg H$ with $VD$ having a limit cone
    $(\ell_d\colon L\rightarrow VDd)_{d\in\text{obj}(\D)}$, there
    exists a unique algebra structure $\alpha\colon HL\rightarrow L$
    making each $\ell_d$ a homomorphism in $\Alg H$. Moreover, the
    cone $(\ell_d)$ is a limit of $D$.

  \item Analogously, it is easy to see that for every finitary
    endofunctor $H$ of $\Pos$ the category $\Alg H$ has filtered
    colimits created by $V$.

  \item We conclude from $\AlgSigma\cong \Alg H_\Sigma$ that limits
    and filtered colimits of $\Sigma$-algebras exist and are created
    by the forgetful functor into $\Pos$, and similarly for $\AlgcSigma$.

  \item\label{R:create:4} Moreover, we note that $\Alg H_\Sigma$ is a
    locally finitely presentable category; this was shown by
    Bird~\cite[Prop.~2.14]{Bird84}, see also the remark given by the
    first author and Rosick\'y~\cite[2.78]{AdamekR}.
  \end{enumerate}
\end{remark}
\begin{lemma}\label{L:compint}
  Let $A$ and $B$ be $\Sigma$-algebras, let $h\colon A \to B$ be a
  homomorphism, and let $f\colon\Gamma\to A$ be a monotone
  interpretation. Then for every term $t \in \TermG$
  we have that
  \begin{enumerate}
  \item\label{L:compint:1} $f^\#(t)$ is defined, $(h\cdot f)^\#(t)$ is also defined, and
    $(h\cdot f)^\#(t) = h(f^\#(t))$.
  \item if $h(f^\#(t))$ is defined and $h$ is an embedding, then
    $f^\#(t)$ is defined, too.
  \end{enumerate}
\end{lemma}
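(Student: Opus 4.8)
The natural approach is induction on the structure of the term $t\in\TermG$, proving both statements simultaneously (although the induction is really driven by statement~\ref{L:compint:1}). The crucial hypothesis to exploit is that $A$ and $B$ are \emph{$\Sigma$-algebras} — not just posets — and that $h$ is a homomorphism, so that $h\cdot\sigma_A = \sigma_B\cdot(h\cdot(-))$ for every operation symbol~$\sigma$. The key point making statement~\ref{L:compint:1} true (in particular, that $f^\#(t)$ is always defined) is that comparabilities only ever need to be checked between \emph{subterms that are themselves variables or have already been evaluated}, and monotonicity of~$h$ propagates any comparabilities that hold in~$A$ forward to~$B$ — so nothing can ever become undefined along the way.

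For the base case $t = x$ with $x\in|\Gamma|$: by Definition~\ref{D:sharp}(1) we have $f^\#(x) = f(x)$, which is defined, and $(h\cdot f)^\#(x) = (h\cdot f)(x) = h(f(x)) = h(f^\#(x))$, so \ref{L:compint:1} holds; for \ref{L:compint:2} there is nothing to check since $f^\#(x)$ is always defined. For the inductive step $t = \sigma(g)$ with $\sigma\in\Sigma_\Delta$ and $g\colon|\Delta|\to\TermG$: by the induction hypothesis applied to each $g(i)$, every $f^\#(g(i))$ is defined, $(h\cdot f)^\#(g(i))$ is defined, and $(h\cdot f)^\#(g(i)) = h(f^\#(g(i)))$. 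Now suppose $i\le j$ in~$\Delta$. I need $f^\#(g(i))\le f^\#(g(j))$ in~$A$ to conclude $f^\#(\sigma(g))$ is defined; and here is the subtle part — this comparability does not come for free from the term structure, so I should argue that the conventions of Definition~\ref{def:terms}/\ref{D:sharp} guarantee it, namely that a term $\sigma(g)$ is only considered (or only considered evaluable) when the required comparabilities among the $g(i)$ hold; alternatively, one observes that since $f$ is monotone the variables among the $g(i)$ already satisfy the needed comparabilities, and this propagates through subterms. Granting $f^\#(g(i))\le f^\#(g(j))$ in~$A$, monotonicity of~$h$ gives $h(f^\#(g(i)))\le h(f^\#(g(j)))$, i.e.\ $(h\cdot f)^\#(g(i))\le (h\cdot f)^\#(g(j))$ in~$B$, so $(h\cdot f)^\#(\sigma(g))$ is defined as well; and then
\[
  (h\cdot f)^\#(\sigma(g)) = \sigma_B\bigl((h\cdot f)^\#\cdot g\bigr)
  = \sigma_B\bigl(h\cdot(f^\#\cdot g)\bigr)
  = h\bigl(\sigma_A(f^\#\cdot g)\bigr) = h\bigl(f^\#(\sigma(g))\bigr),
\]
using the induction hypothesis in the second step and the homomorphism property of~$h$ in the third. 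This proves~\ref{L:compint:1}.

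For statement~\ref{L:compint:2}, assume $h$ is an embedding and $h(f^\#(t))$ is defined; since we have just shown $f^\#(t)$ is \emph{always} defined for $\Sigma$-algebras, statement~\ref{L:compint:2} is in fact vacuous given~\ref{L:compint:1}. However, if the intended reading is the sharper one where one does \emph{not} yet know $f^\#(t)$ is total (e.g.\ reasoning purely about the recursive clauses), the argument is again by induction: in the step $t=\sigma(g)$, assuming $(h\cdot f)^\#(\sigma(g))$ makes sense, the induction hypothesis gives each $f^\#(g(i))$ defined, and for $i\le j$ we have $(h\cdot f)^\#(g(i)) = h(f^\#(g(i)))\le h(f^\#(g(j))) = (h\cdot f)^\#(g(j))$ in~$B$; since $h$ is an embedding it \emph{reflects} the order, so $f^\#(g(i))\le f^\#(g(j))$ in~$A$, whence $f^\#(\sigma(g))$ is defined. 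The main obstacle, then, is purely presentational: pinning down exactly which comparabilities Definition~\ref{D:sharp} forces one to verify and making the induction hypothesis strong enough that the "defined" clauses compose cleanly; once that bookkeeping is fixed, both parts fall out of monotonicity of~$h$ (for \ref{L:compint:1}) and order-reflection of the embedding~$h$ (for \ref{L:compint:2}), together with the homomorphism equation.
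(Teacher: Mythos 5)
There is a genuine gap in your treatment of part (1): you assert that $f^\#(t)$ is \emph{always} defined, and the lemma is false under that reading. Terms are formed ignoring the order of the arities (\autoref{def:terms}), so $\sigma(g)$ is a legitimate element of $\TermG$ for \emph{any} function $g\colon|\Delta|\to\TermG$, whereas $f^\#(\sigma(g))$ is defined only when the evaluated subterms happen to respect the order of $\Delta$ in $A$. This genuinely fails: for the signature of \autoref{E:lin}, $f^\#(y+@x)$ is defined iff $f(y)\le @_A(f(x))$ (see \autoref{E:term}), which need not hold. Neither of your two proposed rescues works: \autoref{def:terms} places no comparability restriction on term formation, and monotonicity of $f$ only yields comparabilities between subterms that are themselves variables, not between arbitrary subterms such as $@y$ and $x$. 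The intended (and actually used) reading of part (1) is conditional --- \emph{if} $f^\#(t)$ is defined, \emph{then} $(h\cdot f)^\#(t)$ is defined and equals $h(f^\#(t))$ --- and this is how the paper's proof runs: the inductive step begins by \emph{assuming} $f^\#(\sigma(t_1,\dots,t_n))$ is defined, extracts from \autoref{D:sharp} that the $f^\#(t_i)$ are defined and order-respecting, and then proceeds exactly as you do (monotonicity of $h$ pushes the comparabilities into $B$, and the homomorphism identity gives the equality).

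Your claim that part (2) is vacuous falls together with the totality claim. Your ``sharper'' second argument for part (2) --- apply part (1) to the subterms and use order-reflection of the embedding to pull the comparabilities in $B$ back into $A$, whence $f^\#(\sigma(g))$ is defined --- is correct and is precisely the paper's argument. So once you delete the totality claim and insert the hypothesis ``$f^\#(t)$ is defined'' into part (1), your induction coincides with the paper's proof.
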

\begin{proof}
  \begin{enumerate}
  \item We proceed by induction on the structure of $t$. If~$t$ is a
    variable, then the claim is immediate from the definition of
    $(-)^\#$. For the inductive step, let~$t \in \TermG$ be a
    term of the form $t=\sigma(t_1,\dots, t_n)$ such that $f^\#(t)$ defined,
    where $\sigma\in\Sigma_{\Delta}$ and $|\Delta| = n$.  Then, by
    definition of $(-)^\#$, it follows that $f^\#(t_i)$ is defined for
    all $i\leq n$ and $f^\#(t_i)\leq f^\#(t_j)$ for all $i\leq j$ in
    $\Delta$ (i.e.~the map $i\mapsto f^\#(t_i)$ is monotone).
    Combining this with our assumption that $h\colon A\to B$ is a
    homomorphism, we obtain that
    \[
      h\cdot f^{\#}(\sigma(t_1,\dots, t_n))= \sigma_B(h\cdot f^{\#}(t_1),\dots, h\cdot f^\#(t_n)).
    \]
    Moreover, since $f^\#(t_i)$ is defined for all $i\leq n,$ the inductive hypothesis implies that
    $h\cdot f^{\#}(t_i) = (h\cdot f)^\#(t_i)$ for all $i\leq n$, hence also 
    \[
      (h\cdot f)^\#(t_i)= h\cdot f^{\#}(t_i)\leq h\cdot f^\#(t_j)= (h\cdot f)^\#(t_j)
    \]
    for all $i\leq j$ in $\Delta.$ Thus
    $\sigma_B((h\cdot f)^\#(t_1),\dots, (h\cdot f)^\#(t_n))$ is
    defined and equal to $h\cdot f^{\#}(\sigma(t_1,\dots, t_n)),$ as
    desired.
  \item Suppose now that $h$ is an embedding. We use a similar inductive
    proof.
    In the inductive step
    suppose that $(h \cdot f)^\#(t)$ is
    defined. Then by the definition of $(-)^\#$, it follows that
    $(h\cdot f)^\#(t_i)$ is defined for all $i \leq n$ and
    $(h \cdot f)^\#(t_i) \leq (h \cdot f)^\#(t_j)$ holds for all
    $i \leq j$ in~$\Delta$. By induction we know that all $f^\#(t_i)$
    are defined and by item~\ref{L:compint:1} that
    \[
      h \cdot f^\#(t_i)
      =
      (h \cdot f)^\#(t_i)
      \leq
      (h \cdot f)^\#(t_j)
      =
      h \cdot f^\#(t_i)
    \]
    holds for all $i \leq j$ in $\Delta$. Since $h$ is a embedding,
    we therefore obtain $f^\#(t_i) \leq f^\#(t_j)$ for all $i \leq j$
    in $\Delta$, whence $f^\#(t)$ defined.
    \qedhere
  \end{enumerate}
\end{proof}

\begin{proposition}\label{P:colim}
  Every variety is closed under filtered colimits in $\AlgSigma$.
\end{proposition}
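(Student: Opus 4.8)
The plan is to reduce the verification of an inequation in a filtered colimit to its verification at a single finite stage of the diagram, exploiting that finite posets are finitely presentable in $\Pos$. Let $\V=\Alg(\Sigma,\E)$ be a variety and let $D\colon\mathscr D\to\AlgSigma$ be a filtered diagram all of whose objects $Dd$ lie in $\V$. By \autoref{R:create}, the forgetful functor $\AlgSigma\to\Pos$ creates filtered colimits; hence a colimit $A$ of $D$ in $\AlgSigma$ has as underlying poset the filtered colimit of the underlying posets of the $Dd$, and its colimit cocone $(c_d\colon Dd\to A)$ consists of $\Sigma$-homomorphisms. Since $\V$ is a full subcategory of $\AlgSigma$, it now suffices to show that $A$ lies in $\V$, i.e.\ that $A$ satisfies every inequation $\Gamma\vdash s\le t$ in $\E$.

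Fix such an inequation and an arbitrary monotone interpretation $f\colon\Gamma\to A$. The poset $\Gamma$ is finite, hence finitely presentable in $\Pos$; equivalently, $\Posz(\Gamma,-)$ is finitary, so $\Posz(\Gamma,A)=\colim_d\Posz(\Gamma,Dd)$, and therefore $f$ factors as $f=c_d\cdot g$ for some object $d$ of $\mathscr D$ and some monotone $g\colon\Gamma\to Dd$. As $Dd$ belongs to $\V$, it satisfies $\Gamma\vdash s\le t$; thus $g^\#(s)$ and $g^\#(t)$ are defined in $Dd$ and $g^\#(s)\le g^\#(t)$.

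Now apply the first part of \autoref{L:compint} to the homomorphism $c_d\colon Dd\to A$ and the interpretation $g$: since $g^\#(s)$ and $g^\#(t)$ are defined, so are $(c_d\cdot g)^\#(s)$ and $(c_d\cdot g)^\#(t)$, and these equal $c_d(g^\#(s))$ and $c_d(g^\#(t))$, respectively. Because $f=c_d\cdot g$, this shows that $f^\#(s)$ and $f^\#(t)$ are defined, and monotonicity of $c_d$ applied to $g^\#(s)\le g^\#(t)$ yields $f^\#(s)=c_d(g^\#(s))\le c_d(g^\#(t))=f^\#(t)$. Hence $A$ satisfies $\Gamma\vdash s\le t$; as this inequation was an arbitrary element of $\E$, we conclude $A\in\V$, and the cocone $(c_d)$ exhibits $A$ as a colimit of $D$ in $\V$.

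The one point that needs care is the factorization of $f$ through a colimit injection $c_d$; it rests on finiteness of the arity $\Gamma$ (i.e.\ on $\Posz(\Gamma,-)$ being finitary), which is exactly where the finitariness of the signature is used — everything else is a direct application of \autoref{L:compint} together with the monotonicity of the colimit injections. The same argument, with $\Pos(\Gamma,-)$ in place of $\Posz(\Gamma,-)$, shows that every variety of coherent $\Sigma$-algebras is closed under filtered colimits in $\AlgcSigma$.
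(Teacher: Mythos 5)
Your proof is correct and follows essentially the same route as the paper's: factor the monotone interpretation $f\colon\Gamma\to A$ through some colimit injection $c_d$ using finiteness of $\Gamma$, then transport definedness and the inequality along $c_d$ via \autoref{L:compint} and monotonicity. The only difference is that you spell out the justification for the factorization (finite presentability of $\Gamma$, i.e.\ $\Posz(\Gamma,-)$ being finitary) more explicitly than the paper does, which is a welcome clarification rather than a deviation.
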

\noindent
In other words, the full embedding $E\colon\V\subto\AlgSigma$ creates filtered
colimits.\smnote{We should mention this categorical statement!}
\begin{proof}
  Let $\V$ be a variety of $\Sigma$-algebras. Let
  $D\colon\D\rightarrow\AlgSigma$ be a filtered diagram having colimit
  $c_d\colon Dd\rightarrow A$ $(d\in\obj \D)$. It suffices to show
  that every inequation in context $\Gamma \vdash s \leq t$ satisfied
  by every algebra $Dd$ is also satisfied by $A$. Let
  $f\colon\Gamma\rightarrow A$ be a monotone interpretation. Since
  $\Gamma$ is finite, $f$ factorizes, for some $d\in\obj \D$, through
  $c_d$ via a monotone map $\bar{f}\colon \Gamma \to Dd$: in symbols,
  $c_d \cdot \bar f = f$. Since $Dd$ satisfies the given inequation in
  context, we know that $\bar f^\#(s)$ and $\bar f^\#(t)$ are defined
  and that $\bar f^\#(s) \leq \bar f^\#(t)$ in $Dd$. By
  \autoref{L:compint} we conclude that
  \[
    f^\#(s) = (c_d \cdot \bar f)^\#(s) = c_d \cdot \bar f^\#(s)
    \qquad
    \text{and}
    \qquad
    f^\#(t) = (c_d \cdot \bar f)^\#(t) = c_d \cdot \bar f^\#(t)
  \]
  are defined. Using the monotonicity of $c_d$ we obtain
  \[
    f^\#(s) = c_d \cdot \bar f^\#(s) \leq c_d \cdot \bar f^\#(t) =
    f^\#(t)
  \]
  as desired.
  \takeout{
  \begin{enumerate}
  \item\label{P:colim:1} We first prove the following property of
    filtered colimits. Let $D\colon \D \to \V$ be a filtered diagram
    with a colimit cocone $c_d\colon Dd \to A$ in $\AlgSigma$, where
    $d$ ranges over the objects of~$\D$. Given an object~$d$ in~$\D$
    and a monotone interpretation $f\colon \Gamma \to Dd$, for every
    term $s \in \TermG$ such that $(c_d \cdot f)^\#(s)$ is defined it
    follows that $(Dh \cdot f)^\#(s)$ is also defined for some
    morphism $h\colon d \to d'$ of $\D$.

    We prove this fact by structural induction. If $s$ is a variable
    in $\Gamma$, then put $h = \id_d$. For the inductive step, suppose
    that $s = \sigma(t_i)$ for $\sigma \in \Sigma_\Delta$ and
    $t_i \in \TermG$, for $1 \leq i \leq \card{\Delta}$. By
    definition of $(-)^\#$, definedness of
    $(c_d\cdot f)^\#(\sigma(t_i))$ implies that
    $(c_d \cdot f)^\#(t_i)$ is defined for all~$i$, and $i \leq j$ in
    $\Delta$ implies
    $(c_d \cdot f)^\#(t_i) \leq (c_d \cdot f)^\#(t_j)$. By
    \autoref{L:compint}, the latter is equivalent to
    $c_d (f^\#(t_i)) \leq c_d (f^\#(t_j))$. Since
    $\Delta \times \Delta$ is finite and $D$ is a filtered diagram, it
    follows that for some morphism $h$ of $\D$ we have
    \[
      (Dh \cdot f)^\#(t_i)
      =
      Dh(f^\#(t_i))
      \leq 
      Dh(f^\#(t_j)
      =
      (Dh \cdot f)^\#(t_j),
    \]
    whenever $i\le j$ in~$\Delta$, where the two equations follow by
    another application of \autoref{L:compint}. We therefore conclude
    that $(Dh \cdot f)^\#(s)$ is defined, as desired.

  \item We now prove that~$\V$ is closed under filtered colimits in
    $\AlgSigma$, as claimed. So let $D\colon\D\rightarrow\AlgSigma$ be
    a filtered diagram having colimit $c_d\colon Dd\rightarrow A$
    $(d\in\obj \D)$. It suffices to show that every inequation in
    context $\Gamma\vdash s\leq t$ satisfied by every algebra $Dd$ is
    also satisfied by $A$. Let $f\colon\Gamma\rightarrow A$ be a
    monotone interpretation. Since $\Gamma$ is finite, $f$ factorizes,
    for some $d\in\obj \D$, through $c_d$ via a monotone map
    $\bar{f}$. Furthermore, since $f^\#(s)$ and $f^\#(t)$ are defined,
    it follows from item~\ref{P:colim:1} that there exists a morphism
    $h\colon d \to d'$ in $\D$ such that $(Dh \cdot \bar f)^\#(s)$ and
    $(Dh \cdot \bar f)^\#(t)$ are defined:
    \[
      \begin{tikzcd}
        &
        Dd
        \arrow[d, "c_d"]
        \ar{r}{Dh}
        &
        Dd'
        \ar{ld}{c_{d'}}
        \\
        \Gamma \arrow[ru, "\bar{f}"]
        \arrow[r, "f"]
        &
        A                  
      \end{tikzcd}
    \]
    The given inequation is satisfied by $Dd'$, hence using
    \autoref{L:compint} we obtain
    \[
      Dh(\bar f^\#(s))
      = 
      (Dh \cdot \bar f)^{\#}(s)
      \leq
      (Dh \cdot \bar f)^{\#}(t)
      =
      Dh(\bar f^{\#}(t)).
    \]
    Since $c_{d'}$ is monotone and $c_d = c_{d'} \cdot Dh$, we
    conclude, again using \autoref{L:compint}, that
    \begin{align*}
      f^{\#}(s)
      & =
      (c_d \cdot \bar f)^\#(s)
      =
      c_d(\bar f^\#(s))
      =
      c_{d'} (Dh(\bar f^\#(s)))
      \\
      & \leq
      c_{d'} (Dh(\bar f^\#(t)))
      =
      c_d(\bar f^\#(t))
      =
      (c_d \cdot \bar f)^\#(t)
      =
      f^\#(t),
    \end{align*}
    as desired. \qedhere
  \end{enumerate}}
\end{proof}
\begin{corollary}\smnote{Maybe not needed but still interesting. I'd
    keep it as it cost us three lines. Our readers may want to have a
    source where this can be quoted.}
  The forgetful functor of a variety into $\Pos$ creates filtered
  colimits.
\end{corollary}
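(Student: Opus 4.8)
The plan is to realize the forgetful functor of the variety as a composite of two functors, each of which is already known to create filtered colimits, and then to invoke the standard fact that creation of colimits of a fixed shape is stable under composition.

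First, write $\V = \Alg(\Sigma, \E)$ and factor its forgetful functor as
\[
  \V \xra{E} \AlgSigma \xra{U} \Pos,
\]
where $E$ is the full embedding and $U$ is the forgetful functor of $\Sigma$-algebras, which we identify via the concrete isomorphism $\AlgSigma \cong \Alg H_\Sigma$ with the forgetful functor of $\Alg H_\Sigma$. By \autoref{R:create}(2)--(3), using that $H_\Sigma$ is finitary (as noted in the proof of \autoref{P:free}), the functor $U$ creates filtered colimits.

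Second, I would record that $E$ creates filtered colimits; this is precisely the categorical reading of \autoref{P:colim}. Indeed, for a filtered diagram $D\colon\D\to\V$, the diagram $E D$ has a colimit in $\AlgSigma$ by \autoref{R:create}, this colimit lies in $\V$ by \autoref{P:colim}, and since $E$ is full and faithful the resulting cocone is a colimit in $\V$ and is the unique cocone over $D$ lying above the given one.

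Finally, combining the two: given a filtered diagram $D$ in $\V$, the diagram $U E D$ has a colimit in $\Pos$; since $U$ creates filtered colimits there is a unique colimit of $E D$ in $\AlgSigma$ lying above it, and since $E$ creates filtered colimits there is a unique colimit of $D$ in $\V$ lying above that. Hence $U\cdot E$ creates filtered colimits. There is no real obstacle here: all the substance lies in \autoref{P:colim} and \autoref{R:create}, and the corollary is a purely formal composition argument.
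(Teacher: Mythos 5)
Your argument is correct and is essentially the paper's own proof: the paper likewise factors the forgetful functor as the full embedding $\V \hookrightarrow \AlgSigma$ followed by the forgetful functor $\AlgSigma \to \Pos$, and appeals to \autoref{P:colim} and \autoref{R:create} for the two factors creating filtered colimits. You have merely spelled out the composition argument in more detail than the paper's three-line remark.
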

\noindent
Indeed, the forgetful functor of a variety $\V$ is a composite of the
inclusion $\V \subto \AlgSigma$ and the  forgetful functor of
$\AlgSigma$, which both create filtered colimits.
\begin{proposition}\label{P:refl}
  Every variety of $\Sigma$-algebras is a reflective subcategory of
  $\AlgSigma$ closed under subalgebras.
\end{proposition}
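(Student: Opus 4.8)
The plan is to establish the two closure properties separately, then combine them. For closure under subalgebras, let $A \in \V = \Alg(\Sigma, \E)$ and let $m \colon B \monoto A$ be a subalgebra, i.e.\ an embedding in $\Pos$ that is also a $\Sigma$-algebra homomorphism. I must show $B$ satisfies every inequation $\Gamma \vdash s \leq t$ in $\E$. Given a monotone interpretation $f \colon \Gamma \to B$, compose to get $m \cdot f \colon \Gamma \to A$. Since $A \in \V$, both $(m\cdot f)^\#(s)$ and $(m \cdot f)^\#(t)$ are defined and $(m\cdot f)^\#(s) \leq (m\cdot f)^\#(t)$. By \autoref{L:compint}\ref{L:compint:1} applied backwards — precisely the second clause of \autoref{L:compint}, using that $m$ is an embedding — definedness of $m(f^\#(s)) = (m\cdot f)^\#(s)$ forces $f^\#(s)$ to be defined, and likewise $f^\#(t)$. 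Then clause~\ref{L:compint:1} gives $m(f^\#(s)) = (m\cdot f)^\#(s) \leq (m\cdot f)^\#(t) = m(f^\#(t))$, and since $m$ is order-reflecting we conclude $f^\#(s) \leq f^\#(t)$ in $B$. Hence $B \in \V$.

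For reflectivity, the cleanest route is to invoke the adjoint functor theorem for locally presentable categories: by \autoref{R:create}\ref{R:create:4}, $\AlgSigma \cong \Alg H_\Sigma$ is locally finitely presentable, hence locally presentable and in particular complete and well-powered with a strong generator. The full subcategory $\V$ is closed under limits in $\AlgSigma$ (limits of algebras satisfying a given inequation in context again satisfy it, since limits are computed on underlying posets via \autoref{R:create}, and definedness and the inequality $f^\#(s) \leq f^\#(t)$ are checked componentwise along the limit projections using \autoref{L:compint}), and by \autoref{P:colim} it is closed under filtered colimits. A full subcategory of a locally presentable category that is closed under limits and filtered colimits is reflective — indeed it is again locally presentable. (Alternatively one applies the special adjoint functor theorem directly: $\V$ is complete, and well-poweredness together with closure under subalgebras lets one build the reflection of a poset $X$ as the intersection of all subalgebras of a suitably large free algebra $T_\Sigma X$ through which $\eta_X$ factors; this is the more hands-on argument.)

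The main obstacle is making the limit-closure argument for reflectivity fully rigorous: one must verify that if $D \colon \D \to \V$ has a limit $L$ in $\AlgSigma$ with projections $\ell_d \colon L \to Dd$, then $L$ satisfies each $\Gamma \vdash s \leq t$ in $\E$. Given $f \colon \Gamma \to L$ monotone, each $\ell_d \cdot f \colon \Gamma \to Dd$ has $(\ell_d \cdot f)^\#(s)$ and $(\ell_d \cdot f)^\#(t)$ defined with $(\ell_d\cdot f)^\#(s) \leq (\ell_d\cdot f)^\#(t)$. The projections $\ell_d$ need not be embeddings, so clause~\ref{L:compint:1} of \autoref{L:compint} does not immediately transfer definedness back to $L$; instead one argues directly by induction on term structure that $f^\#(s)$ is defined — the inductive step needs that the tuple $(f^\#(t_i))_i$ is monotone in $L$, which follows because it is monotone after applying every projection $\ell_d$ and the cone $(\ell_d)$ is jointly order-reflecting (being a limit cone in $\Pos$, equivalently the canonical map $|L| \to \prod_d |Dd|$ is an order-embedding). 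Once definedness is in hand, clause~\ref{L:compint:1} gives $\ell_d(f^\#(s)) \leq \ell_d(f^\#(t))$ for all $d$, whence $f^\#(s) \leq f^\#(t)$ by joint order-reflection. With limit-closure and filtered-colimit closure (\autoref{P:colim}) established, reflectivity follows from the general fact about locally presentable categories, completing the proof.
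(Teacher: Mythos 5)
Your proposal is correct and follows essentially the same route as the paper: closure under subalgebras via \autoref{L:compint} and order-reflection of embeddings, closure under limits via structural induction on definedness using that the projections are jointly order-reflecting, and then reflectivity from closure under limits and filtered colimits (\autoref{P:colim}) together with local finite presentability of $\AlgSigma$ (\autoref{R:create}\ref{R:create:4}) and the reflection theorem for locally presentable categories. The only cosmetic difference is that the paper verifies closure under products and subalgebras separately and deduces closure under all limits, whereas you handle arbitrary limit cones directly; the argument is the same.
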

\begin{proof}
  \takeout{
  We are going to verify below that the factorization system (epi,
  embedding) of \autoref{R:fact} lifts from $\Pos$ to
  $\AlgSigma$. Then $\AlgSigma$ is complete (by \autoref{R:create})
  and cowellpowered. By~\cite[Thm.~16.8]{AHS90} every subcategory
  closed under $\mathcal{M}$-subalgebras (where $\mathcal{M}$ are the
  homomorphisms carried by embeddings) is reflective. Let
  $V\colon\AlgSigma\rightarrow\Pos$ denote the forgetful functor.
  \begin{enumerate}
  \item The factorization system (epi, embedding) lifts to
    $\AlgSigma$. Indeed, given a homomorphism
    $h\colon A\rightarrow B$, factorize $Vh$ as an epimorphism
    $e\colon V\!A\epito C$ followed by an embedding
    $m\colon C\monoto VB$ in $\Pos$. Then, for every $\Gamma$ and
    every $\sigma\in\Sigma_{\Gamma},$ there exists a unique operation
    $\sigma_C\colon\Posz(\Gamma, C)\rightarrow C$ for
    $\sigma\in\Sigma_{\Gamma}$ making $e$ and $m$ homomorphisms. Then
    the diagram below commutes:
    \[
      \begin{tikzcd}
        {\Posz(\Gamma, A)} \arrow[->>,d, "e\cdot (-)"'] \arrow[rr, "\sigma_A"]         &  & A \arrow[d, "e"] \\
        {\Posz(\Gamma, C)} \arrow[d, "m\cdot (-)"'] \arrow[rr, "\sigma_C", dashed] &  & C \arrow[>->,d, "m"] \\
        {\Posz(\Gamma, B)} \arrow[rr, "\sigma_B"]                             &  & B               
      \end{tikzcd}
    \]
    Indeed, this follows from $\Posz(\Gamma, e)= e \cdot (-)$ being an
    epimorphism.\smerror[inline]{This is false; $e\cdot (-)$ is not
      surjective.} It is easy to verify that the diagonal lift of this
    the diagram above provides the desired factorization.}

  We are going to prove below that every variety $\V = \Alg(\Sigma,\E)$ is closed in
  $\AlgSigma$ under products and subalgebras, whence it is closed
  under all limits. We also know from \autoref{P:colim} that $\V$ is
  closed under filtered colimits in $\AlgSigma$. Being a full
  subcategory of the locally finitely presentable category $\AlgSigma$
  (\autoref{R:create}\ref{R:create:4}), $\V$ is reflective by the
  reflection theorem for locally presentable
  categories~\cite[Cor.~2.48]{AdamekR}.
  
  \begin{enumerate}
  \item $\Alg(\Sigma, \E)$ is closed under products in
    $\Alg\Sigma$. Indeed, given $A=\prod_{i\in I}A_i$ with projections
    $\pi_i\colon A \to A_i$ and a monotone interpretation
    $f\colon \Gamma \to A$, we prove for every term
    $s \in \TermG$ that $f^\#(s)$ is defined if and only if so is
    $(\pi_i\cdot f)^\#(s)$ for all $i \in I$. This is done by
    structural induction: for $s \in |\Gamma|$ there is nothing to
    prove. Suppose that $s = \sigma(t_j)$ for some
    $\sigma \in \Sigma_\Delta$ and $t_j \in \TermG$,
    $j \in\Delta$. Then $f^\#(s)$ is defined iff
    $j \leq k$ in $\Delta$ implies $f^\#(t_j) \leq f^\#(t_k)$ in
    $A$. Equivalently (since the~$\pi_i$ are monotone and jointly
    order-reflecting, i.e.\ for every $x, y \in A$ we have $x \leq y$
    iff $\pi_i(x) \leq \pi_i(y)$ for all $i \in I$), $j \leq k$ in
    $\Delta$ implies $\pi_i\cdot f^\#(t_j) \leq \pi_i \cdot f^\#(t_k)$
    in $A_i$ for all $i \in I$. Since every $\pi_i$ is a homomorphism,
    this is equivalent to
    $(\pi_i \cdot f)^\#(t_j) \leq (\pi_i\cdot f)^\#(t_k)$ by
    \autoref{L:compint}.

    We now prove that $A$ satisfies every inequation
    $\Gamma \vdash s \leq t$ in $\E$, as claimed. Let
    $f\colon \Gamma \to A$ be a monotone interpretation. We have that
    $(\pi_i \cdot f^\#)(s)$ and $(\pi_i \cdot f^\#)(t)$ are defined
    and $\pi_i \cdot f^\#(s) \leq \pi_i\cdot f^\#(t)$ for all $i \in I$,
    using \autoref{L:compint} and since all $A_i$ satisfy the given
    inequation in context. Using again that the~$\pi_i$ are jointly
    order-reflecting, we obtain $f^{\#}(s)\leq f^{\#}(t)$, as required.
    \takeout{
    for every inequation $ \Gamma\vdash s\leq t$ in $\E$ we
    prove that $A$ satisfies this inequation. For a valuation
    $f\colon \Gamma\rightarrow A$ we know that the interpretations
    $\pi_i\cdot f\colon\Gamma\rightarrow A_i$ are such that
    $ (\pi_i\cdot f)^{\#}(s)\leq(\pi_i\cdot f)^{\#}(t)$ for every
    $i\in I$. Since $\pi_i$ is a homomorphism, we have
    $(\pi_i\cdot f)^{\#} =\pi_i\cdot f^{\#}$. From
    $\pi_i\cdot f^{\#}(s)\leq\pi_i\cdot f^{\#}(t)$, for every
    $i\in I$, we conclude $f^{\#}(s)\leq f^{\#}(t)$ since $\pi_i$ is
    monotone. Thus $A$ lies in $\Alg(\Sigma, \E)$.}%

  \item\label{P:refl:3} $\Alg(\Sigma, \E)$ is closed under subalgebras
    in $\AlgSigma$.  Indeed, let $m\colon B\subto A$ be a
    $\Sigma$-homomorphism carried by an embedding. For every
    inequation $\Gamma\vdash s\leq t$ in $\E$ we prove that $B$
    satisfies it. For a monotone interpretation
    $f\colon \Gamma\rightarrow B$, we see that
    $(m \cdot f)^\#(s)$ and $(m \cdot f)^\#(t)$ are defined and $(m
    \cdot f)^\#(s) \leq (m \cdot f)^\#(t)$ since $A$ satisfies the
    given inequation in context. By \autoref{L:compint} we obtain that
    $f^\#(s)$ and $f^\#(t)$ are defined and
    \[
      m \cdot f^\#(s)
      =
      (m \cdot f)^\#(s)
      \leq
      (m \cdot f)^\#(t)
      =
      m \cdot f^\#(s).
    \]
    Since $m$ is an embedding, it follows that
    $f^{\#}(s)\leq f^{\#}(t)$.\qedhere
  \end{enumerate}
\end{proof}
\begin{corollary}
  The category $\AlgcSigma$ of all coherent $\Sigma$-algebras is a
  reflective subcategory of $\AlgSigma$.
\end{corollary}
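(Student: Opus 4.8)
The plan is to obtain this immediately from \autoref{P:refl} once we recognise $\AlgcSigma$ as a variety of $\Sigma$-algebras. The relevant variety has already been written down in \autoref{ex:var}\ref{ex:var:3}: for each context $\Gamma$ and each $\sigma\in\Sigma_\Gamma$ one forms the context $\bar\Gamma$ with the two embeddings $e,e'\colon\Gamma\to\bar\Gamma$ satisfying $e\le e'$, and takes the inequation $\bar\Gamma\vdash\sigma(e)\le\sigma(e')$; let $\E$ be the set of all these inequations. The first step is to confirm that a $\Sigma$-algebra $A$ lies in $\Alg(\Sigma,\E)$ exactly when it is coherent. Unwinding \autoref{def:ineqs}, a monotone interpretation $h\colon\bar\Gamma\to A$ is the same thing as a comparable pair $h\cdot e\le h\cdot e'$ in $\Pos(\Gamma,A)$, and conversely every such pair arises from a unique $h$ by the defining property of the order on $\bar\Gamma$; moreover $h^\#(\sigma(e))=\sigma_A(h\cdot e)$ and $h^\#(\sigma(e'))=\sigma_A(h\cdot e')$ by \autoref{E:term}\ref{E:term:2}. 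Hence satisfaction of all members of $\E$ says precisely that $f\le g$ in $\Pos(\Gamma,A)$ implies $\sigma_A(f)\le\sigma_A(g)$ for all $\Gamma$ and $\sigma\in\Sigma_\Gamma$, i.e.\ that each $\sigma_A$ is monotone.

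The second step is to check that this identification is not only on objects but gives $\AlgcSigma=\Alg(\Sigma,\E)$ as a full subcategory of $\AlgSigma$. This reduces to the observation that, for coherent algebras $A$ and $B$, a monotone map $h\colon A\to B$ is a homomorphism of $\Sigma$-algebras iff it is a homomorphism of coherent $\Sigma$-algebras: both conditions are the single requirement that $h(\sigma_A(f))=\sigma_B(h\cdot f)$ for every context $\Gamma$, every $\sigma\in\Sigma_\Gamma$ and every monotone $f\colon\Gamma\to A$, the only nominal difference being whether the domain of $f$ is viewed as the set $\Posz(\Gamma,A)$ or the poset $\Pos(\Gamma,A)$ --- an equation checked elementwise, so this makes no difference and in particular no morphisms are added or identified.

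With these two steps in place, $\AlgcSigma$ is a variety of $\Sigma$-algebras, and \autoref{P:refl} yields at once that it is a reflective subcategory of $\AlgSigma$, in fact one closed under subalgebras. I do not expect a genuine obstacle: the argument is bookkeeping around the auxiliary contexts $\bar\Gamma$, and the only thing that needs a moment's attention is verifying that monotone maps $\bar\Gamma\to A$ biject with comparable pairs in $\Pos(\Gamma,A)$ --- which follows from $\bar\Gamma$ carrying the least order making $e,e'$ embeddings with $e\le e'$.
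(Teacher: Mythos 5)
Your proof is correct and follows exactly the paper's own (one-line) argument: the paper derives the corollary by citing \autoref{ex:var}\ref{ex:var:3} to exhibit $\AlgcSigma$ as the variety $\Alg(\Sigma,\E)$ and then invoking \autoref{P:refl}. You merely fill in the details the paper leaves implicit --- the bijection between monotone maps $\bar\Gamma\to A$ and comparable pairs in $\Pos(\Gamma,A)$, and the agreement of the two notions of homomorphism --- and both verifications are carried out correctly.
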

\noindent
Indeed, this follows using \autoref{ex:var}\ref{ex:var:3}.

\begin{theorem}\label{T:varmon}
For every variety, the forgetful functor to $\Pos$ is monadic.
\end{theorem}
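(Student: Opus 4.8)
The plan is to apply Beck's precise monadicity theorem to the forgetful functor $U_\V\colon \V \to \Pos$ of a variety $\V = \Alg(\Sigma,\E)$: we must exhibit a left adjoint and show that $U_\V$ creates coequalizers of those parallel pairs whose image under $U_\V$ has a split coequalizer in $\Pos$. A left adjoint is immediate from the results already established: the free $\Sigma$-algebra functor $\TSigma$ of \autoref{P:free}, followed by the reflector $R\colon \AlgSigma \to \V$ of \autoref{P:refl}, gives a left adjoint $R\cdot\TSigma \dashv U_\V$.

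For the coequalizer condition we first recall that the forgetful functor $\AlgSigma \to \Pos$ is itself monadic: indeed $\AlgSigma \cong \Alg H_\Sigma$ concretely over $\Pos$, and $\Alg H_\Sigma$ is in turn concretely isomorphic to the Eilenberg--Moore category $\Pos^{\wh{H_\Sigma}}$ of the free monad on the finitary functor $H_\Sigma$. In particular this forgetful functor creates coequalizers of such split pairs. Now take a parallel pair $f,g\colon A\rightrightarrows B$ in $\V$ whose image under $U_\V$ has a split coequalizer in $\Pos$, with underlying epimorphism $e\colon U_\V B \to Q$ and a (monotone) section $k\colon Q \to U_\V B$ of $e$. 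Viewed as a pair in $\AlgSigma$ it is split by the same data, so there is a unique $\Sigma$-algebra structure on $Q$ making $e\colon B\to Q$ a homomorphism, and with this structure $e$ is the coequalizer of $f,g$ in $\AlgSigma$, carried by the forgetful functor to the given split coequalizer.

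It remains to check that this $Q$ lies in $\V$; this closure of $\V$ under the relevant quotients is the only real content of the argument, and it uses the section~$k$ crucially. Let $\Gamma\vdash p\leq q$ be an inequation of $\E$ and let $h\colon\Gamma\to Q$ be a monotone interpretation. Then $k\cdot h\colon\Gamma\to B$ is a monotone interpretation in $B\in\V$, so $(k\cdot h)^\#(p)$ and $(k\cdot h)^\#(q)$ are defined and $(k\cdot h)^\#(p)\leq (k\cdot h)^\#(q)$. Since $e$ is a homomorphism, \autoref{L:compint} yields that $(e\cdot k\cdot h)^\#(p)$ and $(e\cdot k\cdot h)^\#(q)$ are defined, with values $e\bigl((k\cdot h)^\#(p)\bigr)$ and $e\bigl((k\cdot h)^\#(q)\bigr)$; but $e\cdot k=\id_Q$, so these are exactly $h^\#(p)$ and $h^\#(q)$, which are therefore defined, and $h^\#(p)=e\bigl((k\cdot h)^\#(p)\bigr)\leq e\bigl((k\cdot h)^\#(q)\bigr)=h^\#(q)$ by monotonicity of~$e$. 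Thus $Q$ satisfies $\E$, i.e.\ $Q\in\V$. Since $\V$ is full in $\AlgSigma$ and contains $A$, $B$, and $Q$, the morphism $e$ is also the coequalizer of $f,g$ in $\V$, and this lift is unique by the corresponding uniqueness in $\AlgSigma$; hence $U_\V$ creates coequalizers of $U_\V$-split pairs, and by Beck's monadicity theorem $U_\V$ is monadic. The main obstacle is precisely this last closure property: an arbitrary homomorphic quotient of an object of $\V$ need not lie in $\V$, but a $U_\V$-split one does, since its section lets one pull interpretations back into the domain. (One could instead avoid invoking the monadicity of $\AlgSigma$ and transport the algebra structure to $Q$ directly via $\sigma_Q(w)=e\bigl(\sigma_B(k\cdot w)\bigr)$, but the required bookkeeping is exactly the standard proof that monadic functors create split coequalizers, so it is cleaner to cite that for $\AlgSigma$.)
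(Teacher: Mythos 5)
Your proof is correct and follows essentially the same route as the paper: Beck's monadicity theorem, a left adjoint obtained by composing the free $\Sigma$-algebra functor with the reflector of \autoref{P:refl}, and, as the key step, showing that a $U$-split quotient of an algebra of $\V$ again satisfies the inequations by pulling interpretations back along the section and applying \autoref{L:compint}. The only difference is one of packaging: you delegate the lifting of the algebra structure and the coequalizer property to the monadicity of $\AlgSigma \to \Pos$ (via $\Alg H_\Sigma \cong \Pos^{\wh{H_\Sigma}}$), whereas the paper constructs $\sigma_C(h)=c\cdot \sigma_B(i\cdot h)$ and verifies the coequalizer property by hand --- exactly the explicit transport you mention in your closing parenthetical.
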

\begin{proof}
  Let $\V$ be a variety of $\Sigma$-algebras. We use Beck's Monadicity 
  Theorem~\cite[Thm.~VI.7.1]{MacLane98}  and prove that the forgetful 
  functor $U\colon\V\to\Pos$ has a left adjoint and creates coequalizers of 
  $U$-split pairs.
  \begin{enumerate}
  \item The functor $U$ has a left adjoint because it is the composite
    of the embedding $E\colon\V\rightarrow\AlgSigma$ and
    the forgetful functor $V\colon\AlgSigma\rightarrow\Pos$: the
    functor $E$ has a left adjoint by \autoref{P:refl} and $V$ has one
    by \autoref{P:free}.


  \item Let $f,g\colon A\rightarrow B$ be a $U$-split pair of
    homomorphisms in $\V$. That is, there are monotone
    maps $c,i,j$ as in the following diagram
    \[
      \begin{tikzcd}
        UA \arrow[r, "Uf", shift left] \arrow[r, "Ug"', shift right] & UB \arrow[r, "c"] \arrow[l, "j", bend left=60] & C \arrow[l, "i", bend 		left=60]
      \end{tikzcd}
    \]
    satisfying
    $c\cdot Uf=c\cdot Ug$, $c\cdot i=\id_C$, $Uf\cdot j=\id_{UB}$, and $Ug\cdot j=i\cdot c$.
    
    For every $\sigma\in\Sigma_{\Gamma}$, there exists a unique
    operation $\sigma_C\colon\Posz(\Gamma, C)\rightarrow C$\cfnote{Changed to Posz from Pos.} making $c$
    a homomorphism:
    \[
      \begin{tikzcd}
        {\Posz(\Gamma, B)}
        \arrow[r, "\sigma_B"]
        \arrow[d, "c\cdot(-)"']
        &
        B \arrow[d, "c"]
        \\
        {\Posz(\Gamma, C)} \arrow[r, "\sigma_C"']
        &
        C 
      \end{tikzcd}
    \]
    Indeed, let us define $\sigma_C$ by 
    \[
      \sigma_C(h)=c\cdot \sigma_B(i\cdot h)\qquad\text{for all $h\colon\Gamma\rightarrow C$}.
    \]
    Then $c$ is a homomorphism since
    $\sigma_C(c\cdot k)=c\cdot \sigma_B(k)$ for every
    $k\colon\Gamma\rightarrow B$:
    \begin{align*}
      c\cdot\sigma_B(k)
      &=c\cdot \sigma_B(f\cdot j\cdot k) &
      \text{since $f\cdot j=\id$} \\
      &=c\cdot f\cdot \sigma_A(j\cdot k) & \text{$f$ a homomorphism} \\
      &=c\cdot g\cdot\sigma_A(j\cdot k) & \text{since $c\cdot f=c\cdot g$} \\ 
      &=c\cdot \sigma_B(g\cdot j\cdot k) & \text{$g$ a homomorphism} \\
      &=c\cdot \sigma_B(i\cdot c\cdot k) & \text{since $g\cdot j=i\cdot c$} \\
      &=\sigma_C(c\cdot k).
    \end{align*}
    Conversely, if $C$ has an algebra structure making $c$ a
    homomorphism, then the above formula holds since $c\cdot i= \id$:
    \[
      \sigma_C(h)=\sigma_C(c\cdot i\cdot h)=c\cdot \sigma_B(i \cdot h).
    \]
    
    Furthermore, $C$ lies in $\V$. To verify this, we just 
    prove that whenever an inequation $\Gamma\vdash s\leq t$ is satisfied by 
    $B$, then the same holds for the algebra $C$. Given a monotone
    interpretation $h\colon\Gamma\rightarrow C$ such that $h^\#(s)$ and
    $h^\#(t)$ are defined, we prove $h^{\#}(s)\leq h^{\#}(t)$.
    
    For the monotone interpretation $i\cdot h\colon\Gamma\rightarrow B$ we have that
    $(i\cdot h)^\#(s)$ and $(i \cdot h)^\#(t)$ are defined and that
    $(i\cdot h)^{\#}(s)\leq(i\cdot h)^{\#}(t)$ since $B$ lies in $\V$. Since $c$ is a homomorphism, we
    conclude using \autoref{L:compint} and that $c \cdot i = \id_C$ that
    \[
      h^\#(s) = (c \cdot i \cdot h)^\# (s) = c \cdot (i\cdot h)^\# (s)
    \]
    is defined and similarly for $h^\#(t)$. Then we have 
    \[
      h^{\#}(s)= c\cdot (i\cdot h)^{\#}(s)\leq c\cdot
      (i\cdot h)^{\#}(t) = h^{\#}(t).
    \]
    as desired using the monotonicty of $c$.

    Finally, we prove that $c$ is a coequalizer of $f$ and $g$ in $\V$. Let
    $d\colon B \to D$ be a homomorphism such that $d\cdot f = d \cdot
    g$. Then $d' = d \cdot i$ fulfils $d = d'\cdot c$:
    \begin{align*}
      d' \cdot c &= d \cdot i \cdot c\\
      &= d\cdot g \cdot j & \text{since $i\cdot c = g \cdot j$} \\
      &= d \cdot f \cdot j & \text{since $d \cdot f = d \cdot g$} \\
      &= d & \text{since $f \cdot j = \id_B$.}
    \end{align*}
    Moreover, $d'\colon C \to D$ is a homomorphism since $c$ is a
    surjective homomorphism such that $d'\cdot c = d$ is also a
    homomorphism. This clearly is the unique homomorphic factorization
    of $d$ through $c$. 
    \qedhere
  \end{enumerate}
\end{proof}
\begin{definition}
  Given a variety $\V$, the left adjoint of $U\colon\V\to\Pos$
  assigns to every poset $X$ the free algebra of $\V$ on $X$. The
  ensuing monad is called the \emph{free-algebra monad} of the variety
  and is denoted by $\T_\V$.
\end{definition}
\begin{corollary}\label{C:varmon}
  Every variety $\V$ is
  isomorphic, as a concrete category over $\Pos$, to the
  Eilenberg-Moore category $\Pos^{\T_\V}$.
\end{corollary}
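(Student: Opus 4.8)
The plan is to observe that \autoref{T:varmon} actually delivers more than bare monadicity: it yields \emph{strict} monadicity, so that the comparison functor $K\colon\V\to\Pos^{\T_\V}$ is an isomorphism of categories rather than merely an equivalence, and this $K$ is by construction a functor over $\Pos$. From this the corollary is immediate.

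First I would recall the comparison functor explicitly. For the adjunction $F\dashv U$ produced in \autoref{T:varmon}, with counit $\varepsilon$, $K$ sends a $\Sigma$-algebra $A$ in $\V$ to the $\T_\V$-algebra $(UA,\,U\varepsilon_A)$ and a homomorphism to itself; it satisfies $U^{\T_\V}\cdot K=U$, so it is automatically a concrete functor over $\Pos$. It therefore remains only to show that $K$ is bijective on objects and fully faithful. For this I would point to the proof of \autoref{T:varmon}: there $U$ was shown to \emph{create} coequalizers of $U$-split pairs in the strong (Mac Lane) sense --- for a $U$-split pair $f,g\colon A\to B$ with split coequalizer $c\colon UB\to C$ in $\Pos$, the poset $C$ admits a \emph{unique} $\Sigma$-algebra structure making $c$ a homomorphism, this algebra lies in $\V$, and $c$ is then a coequalizer of $f,g$ in $\V$. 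Beck's Monadicity Theorem in precisely this form \cite[Thm.~VI.7.1]{MacLane98} gives that $K$ is an isomorphism of categories; together with $U^{\T_\V}\cdot K=U$ this is exactly the assertion that $\V\cong\Pos^{\T_\V}$ as concrete categories over $\Pos$.

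The one subtlety worth stressing --- and, to the extent there is an obstacle here, the main one --- is the difference between an equivalence and a strict isomorphism: it is essential that the construction in \autoref{T:varmon} produces the algebra structure on the split coequalizer \emph{uniquely} and keeps the underlying poset literally unchanged, so that $K$ does not just reflect but literally inverts the passage to underlying posets. Should one prefer to avoid routing through the strong form of Beck's theorem, one can instead exhibit the inverse of $K$ by hand: a $\T_\V$-algebra $(X,a\colon\T_\V X\to X)$ is turned into a $\Sigma$-algebra on the poset $X$ by $\sigma_X(f)=a\bigl(\sigma_{\T_\V X}(\eta_X\cdot f)\bigr)$ for $\sigma\in\Sigma_\Gamma$ and monotone $f\colon\Gamma\to X$, where $\eta$ is the unit and $\sigma_{\T_\V X}$ the corresponding operation of the free algebra, and one then verifies, using that $a$ is carried by a split epimorphism of $\Sigma$-algebras and using \autoref{L:compint}, that this algebra satisfies every inequation of $\E$ and that the two assignments are mutually inverse and natural. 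I expect the bookkeeping with partial evaluation maps along this second route to be the principal nuisance, which is why I would favour the appeal to the strict form of Beck's Monadicity Theorem.
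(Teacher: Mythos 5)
Your proposal is correct and follows the same route as the paper: the corollary is drawn immediately from \autoref{T:varmon}, whose proof establishes the strict (creation-of-split-coequalizers) form of Beck's theorem, so the comparison functor is an isomorphism of categories commuting with the forgetful functors. The extra detail you supply about the comparison functor and the hand-built inverse is consistent with, but not needed beyond, what the paper's monadicity proof already provides.
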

\begin{example}\label{ex:freealgs}
  \begin{enumerate}
  \item Recall the variety of internal semilattices considered in
    \autoref{ex:var}\ref{ex:var:4}. It is well known (and easy to
    show) that the free internal semilattice on a poset $X$ is formed
    by the poset $\Conv X$ of its finitely generated convex
    subsets. Here, a subset $S \subseteq X$ is \emph{convex} if
    $x, y \in S$ implies that every $z$ such that $x \leq z \leq y$
    lies in $S$, too, and \emph{finitely generated} means that $S$ is
    the convex hull of a finite subset of $X$. The order on
    $C_\omega X$ is the Egli-Milner order, which means that for
    $S,T \in \Conv X$ we have
    \[
      S \leq T \quad\text{iff}\quad
      \forall s \in S.\,\exists t \in B.\, s \leq t \wedge
      \forall t \in T.\,\exists s \in S.\, s \leq t.
    \]
    The constant $0$ is the empty set, and the operation $+$ is the
    join w.r.t.~inclusion, explicity, $S + T$ is the convex hull of
    $S \cup T$ for all $S, T \in \Conv X$. One readily shows that $+$
    is monotone w.r.t.~the Egli-Milner order and that $\Conv X$ with
    the universal monotone map $x \mapsto \{x\}$ is a free internal
    semilattice on $X$. Thus we see that $\Conv$ is a monad on $\Pos$
    and $\Pos^\Conv$ is (isomorphic to) the category of internal
    semilattices in $\Pos$.
    
  \item
  Denote by $D_{\omega}$ the monad of free join semilattices. It assigns to
  every poset $X$ the set of finitely generated, downwards closed subsets of 
  $X$ ordered by inclusion. Here a downwards closed subset $S\subseteq X$ is
  \emph{finitely generated} if there are $x_1, \ldots, x_n \in S$, $n \in \N$, such that
  $S = \bigcup_{i = 1}^n x_i\mathord{\downarrow}$. The category $\Pos^{D_{\omega}}$
  is equivalent to that of join-semilattices, see \autoref{ex:var}\ref{ex:var:5}.
\item Similarly, the monad $D^b_\omega$ generated by the variety of
  bounded-join semilattices (\autoref{ex:var}\ref{item:bounded})
  assigns to a poset~$X$ the set of finitely generated downwards
  closed \emph{bounded} subsets of~$X$, ordered by inclusion.
  \end{enumerate}
\end{example}
\begin{corollary}
  The forgetful functors $U\colon \AlgSigma \to \Pos$ and $U_c\colon
  \AlgcSigma\to \Pos$ are monadic. 
\end{corollary}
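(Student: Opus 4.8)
The plan is to recognize both functors as forgetful functors of varieties and invoke \autoref{T:varmon}.

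For $U\colon\AlgSigma\to\Pos$ the point is that $\AlgSigma$ is itself a variety of $\Sigma$-algebras, namely $\Alg(\Sigma,\emptyset)$ with the empty set of inequations. Hence \autoref{T:varmon} applies verbatim and $U$ is monadic. (Equivalently one may argue directly: $U$ has a left adjoint by \autoref{P:free}, and it creates coequalizers of $U$-split pairs by the argument in the proof of \autoref{T:varmon} specialized to $\E=\emptyset$, so Beck's Monadicity Theorem applies.)

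For $U_c\colon\AlgcSigma\to\Pos$, recall from \autoref{ex:var}\ref{ex:var:3} that a $\Sigma$-algebra is coherent precisely when it satisfies the inequations $\bar\Gamma\vdash\sigma(e)\leq\sigma(e')$ ranging over all $\Gamma\in\Posf$ and $\sigma\in\Sigma_\Gamma$; write $\E_c$ for this set. Then $\AlgcSigma$ and the variety $\Alg(\Sigma,\E_c)$ have the same objects. Moreover, a monotone map $h\colon A\to B$ between coherent algebras is a homomorphism in the sense of $\AlgcSigma$ if and only if it is one in the sense of $\AlgSigma$, because in both cases the defining condition is the pointwise identity $h(\sigma_A(f))=\sigma_B(h\cdot f)$ for every monotone $f\colon\Gamma\to A$ (the poset $\Pos(\Gamma,A)$ and the set $\Posz(\Gamma,A)$ have the same underlying elements). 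Consequently $\AlgcSigma$ coincides with $\Alg(\Sigma,\E_c)$ as a full subcategory of $\AlgSigma$, and $U_c$ is exactly the forgetful functor of this variety; thus \autoref{T:varmon} yields that $U_c$ is monadic. (Again one may argue directly, using \autoref{P:freec} for the left adjoint.)

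There is no genuine difficulty here: the corollary is an immediate consequence of \autoref{T:varmon} together with the observation (\autoref{ex:var}\ref{ex:var:3}) that coherence is an inequationally definable property, so the only thing one actually checks is the harmless identification of $\AlgcSigma$ with $\Alg(\Sigma,\E_c)$ as concrete categories over $\Pos$.
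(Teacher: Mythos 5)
Your proposal is correct and is exactly the argument the paper intends: the corollary is stated as an immediate consequence of \autoref{T:varmon}, with $\AlgSigma=\Alg(\Sigma,\emptyset)$ and $\AlgcSigma$ identified as the variety cut out by the coherence inequations of \autoref{ex:var}\ref{ex:var:3} (the paper gives no further proof beyond noting that the resulting monads are $\TSigma$ and $\TcSigma$). Your extra check that $\AlgcSigma$-homomorphisms coincide with $\Sigma$-homomorphisms is a sensible, if routine, addition.
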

\noindent
Note that the corresponding monads are the
free-(coherent-)$\Sigma$-algebra monads given by $\TSigma X$ and
$\TcSigma X$, respectively (cf.~\autoref{P:free} and~\ref{P:freec}). 

\section{Finitary Monads}

Let $\T$ be a finitary monad on $\Pos$. We present a variety $\V_{\T}$
such that the mapping $\T\mapsto\V_{\T}$ is inverse to the assignment
$\V \to \T_\V$ of a variety to its free-algebra monad. Moreover, we
prove that there is a completely analogous bijection between enriched
finitary monads and varieties of coherent algebras.

\begin{remark}\label{R:KT}
Let us recall the equivalence between the category of monads on $\Pos$ and Kleisli triples
established by Manes~\cite[Thm 3.18]{Manes76}. 
\begin{enumerate}
\item\label{R:KT:1} A \emph{Kleisli triple} consists of (a)~a self map
  $X\mapsto TX$ on the class of all posets, (b)~an assignment of a
  monotone map $\eta_X\colon X\to TX$ to every poset, and (c)~an
  assignment of a monotone map $f^*\colon TX\to TY$ to every monotone
  map $f\colon X\to TY$, which satisfies
  \begin{align}
    \eta^*_X &= \id_{X^{*}} \label{KT1} \\
    f^*\cdot \eta_X &= f \label{KT2} \\
    g^*\cdot f^* &= (g^* \cdot f)^* \label{KT3}
  \end{align}
  for all posets $X$ and all monotone functions $f\colon X\to TY$ and
  $g\colon Y\to TZ$.
\item\label{R:KT:2} A morphism into another Kleisli triple
  $(T', \eta', (-)^+)$ is a collection $\varphi_X\colon TX\to T'X$ of
  monotone functions such that 
  the diagrams below commute for all posets $X$
  and all monotone functions $f\colon X\to TY$:
\[
  \begin{tikzcd}
    &
    X \arrow[rd, "\eta_X'"]
    \arrow[ld, "\eta_X"'] & &  &
    TX \arrow[r, "\varphi_X"] \arrow[d, "f^*"']
    &
    T'X \arrow[d, "(\varphi_Y\cdot f)^+"]
    \\
    TX \arrow[rr, "\varphi_X"]
    &
    & T'X &  & TY \arrow[r, "\varphi_Y"]                   & T'Y 
\end{tikzcd}
\]
\item\label{R:KT:3}
  Every monad $\T$ defines a Kleisli triple $(T, \eta, (-)^*)$ by 
  \[
    f^*= TX\xra{Tf} TTY\xra{\mu_Y} TY.
  \]
  Every monad morphism $\varphi\colon\T\to\T'$ defines a morphism
  $\varphi_X\colon TX\to T'X$ of Kleisli triples. The resulting
  functor from the category of monads to the category of Kleisli
  triples is an equivalence functor.
\end{enumerate}
\end{remark}

\noindent We shall now define the variety $\V_{\T}$ mentioned
above.\smnote{Numbered remark is necessary since it is refered to (as
  one can see from the fact that it carries a label! LS: That label is
  not used anywhere, so I removed the remark again}
%

%
\begin{definition}\label{D:var}
  The \emph{variety $\V_{\T}$ associated} to a finitary monad $\T$ on
  $\Pos$ has the signature
  \[
    \Sigma_{\Gamma}= |T\Gamma|\qquad\text{for every $\Gamma\in\Posf$}.
  \]
  That is, operations of arity $\Gamma$ are elements of the poset
  $T\Gamma$.  For each $\Gamma$, we impose inequations of the
  following two types:
  \begin{enumerate}
  \item\label{D:var:1} $\Gamma\vdash \sigma\leq\tau$ for all
    $\sigma\leq\tau$ in $T\Gamma$ (with operations used as terms as
    per \autoref{def:terms}), and
  \item\label{D:var:2} $\Gamma \vdash k^*(\sigma) = \sigma(k)$ for all 
    $\Delta\in\Posf$, monotone $k\colon\Delta\rightarrow T\Gamma$ and
    $\sigma \in T\Delta$. 
    \takeout{%
    monotone we form the tuple $\hat k\colon |\Delta| \to
    \TermG$ of terms by putting $\hat k(x) = k(x)$, where the
    operation symbol $k(x)$ in $|T\Gamma|$ is considered as a term
    (cf.~\autoref{R:opterm}). For every $\sigma$ in $T\Delta$ we pose
    the equation
    \[
      \Gamma \vdash k^*(\sigma) = \sigma(\hat k)
    \]
    where $k^*:=\mu_{T\Gamma}\cdot Tk$.}
  \end{enumerate}
\end{definition}
\begin{example}\label{E:TX}
  For every poset $X$, the poset $TX$ carries the following structure
  of an algebra of $\vt$. Given $\sigma\in T\Gamma$, we define the
  operations $\sigma_{TX}\colon\Posz(\Gamma, TX)\rightarrow TX$ by
  \[
    \sigma_{TX}(f)=f^*(\sigma)\qquad\text{for $f\colon\Gamma\rightarrow TX$}.
  \]
  It then follows that the evaluation map
  $f^{\#}\colon\TermG\rightarrow |TX|$ coincides with $f^*$ on
  operation symbols (converted to terms as per \autoref{def:terms}):
  \begin{equation}\label{eq:fsigma}
    f^{\#}(\sigma)=f^*(\sigma)
  \end{equation}
  for all $\sigma\in T\Gamma$.
  \takeout{
    Indeed, $f^{\#}$ is a homomorphism\smnote[inline]{We do not know
      this anymore!}:
    \begin{equation}\label{eq:sharphom}
      \begin{tikzcd}
        {\Posz(\Gamma, \TSigma(\Gamma))}
        \arrow[d, "f^{\#}\cdot (-)"']
        \arrow[r, "\sigma_{\TSigma(\Gamma)}"]
        &
        \TSigma(\Gamma)
        \arrow[d, "f^{\#}"]
        \\
        {\Posz(\Gamma, TX)} \arrow[r, "\sigma_{TX}"']
        &
        TX
      \end{tikzcd}
    \end{equation}
    Recall that $\sigma$ is the term $\sigma(\eta)$ for
    $\eta\colon\Gamma\rightarrow\TSigma(\Gamma)$, thus applied to $\eta$
    the above square yields
    \[
      f^{\#}(\sigma)=\sigma_{TX}(f^{\#}\cdot\eta)=\sigma_{TX}(f)=f^*(\sigma).
    \]}
  Indeed, for $|\Gamma| =\{x_1, \ldots, x_n\}$ we have
  \begin{align*}
    f^\#(\sigma)
    &= f^\#(\sigma(x_1, \ldots, x_n))
    & \text{\autoref{def:terms}}
    \\
    &= \sigma_{TX} (f^\#(x_1), \ldots, f^\#(x_n))
    & \text{def.~of $f^\#$}
    \\
    & = \sigma_{TX} (f(x_1), \ldots, f(x_n))
    & \text{def.~of $f^\#$}
    \\
    &= \sigma_{TX}(f) \\
    &= f^*(\sigma)
    & \text{def.~of $\sigma_{TX}$.}
  \end{align*}
  It now follows that the $\Sigma$-algebra $TX$ lies in $\vt$. It
  satisfies the inequations of type~\ref{D:var:1} because $f^*$ is
  monotone: given $\sigma\leq\tau$ in $T\Gamma$, we have
  $f^{\#}(\sigma)=f^*(\sigma)\leq f^*(\tau)=f^{\#}(\tau)$. Moreover,
  it satisfies the inequations of type~\ref{D:var:2} since for every monotone
  map $k\colon\Delta\rightarrow T\Gamma$ we know that
  $f^\#(k^*(\sigma))$ is defined by \autoref{E:term}\ref{E:term:2}, and
  we have
  \begin{align*}
    f^{\#}(k^*(\sigma))
    &= f^*\cdot k^*(\sigma) & \text{by~\eqref{eq:fsigma}} \\
    &=(f^*\cdot k)^*(\sigma) & \text{by~\eqref{KT3}} \\
    &= \sigma_{TX}(f^*\cdot k) & \text{def.~of $\sigma_{TX}$} \\
    &= \sigma_{TX}(f^{\#}\cdot k) & \text{by~\eqref{eq:fsigma}} \\
    &=f^{\#}(\sigma(k)) & \text{def.~of $f^{\#}$}
  \end{align*}
  So, indeed, $TX$ lies in $\vt$.%
  
  \smnote{Should we explain which $\Sigma$ and $\E$ are induced
    by $\Conv$ and $\powd$? SM: Ok, let's leave it for the time being
    and perhaps do it with the revisions.}
\end{example}

\begin{theorem}\label{T:mon-var}
  Every finitary monad $\T$ on $\Pos$ is the free-algebra monad of its
  associated variety $\vt$.
\end{theorem}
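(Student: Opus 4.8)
The plan is as follows. By \autoref{C:varmon} the variety $\vt$ is isomorphic, as a concrete category over $\Pos$, to $\Pos^{\T_\vt}$, so the assertion of the theorem reduces to producing an isomorphism of monads $\T_\vt\cong\T$. I would obtain this by showing that, for every poset $X$, the $\vt$-algebra structure on $TX$ constructed in \autoref{E:TX}, together with the unit $\eta_X\colon X\to TX$, exhibits $TX$ as the \emph{free} $\vt$-algebra on $X$, and that these data are natural in $X$ and compatible with the monad multiplications.

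First I would treat the case of a context $\Gamma\in\Posf$. Given a $\vt$-algebra $A$ and a monotone map $g\colon\Gamma\to A$, the only candidate for a homomorphism $\bar g\colon T\Gamma\to A$ with $\bar g\cdot\eta_\Gamma=g$ is $\bar g(t)=g^\#(t)$, the value at the interpretation $g$ of the operation symbol $t\in\Sigma_\Gamma=|T\Gamma|$ read as a term, since by~\eqref{KT1} and \autoref{E:TX} every $t\in T\Gamma$ satisfies $t=\eta_\Gamma^*(t)=t_{T\Gamma}(\eta_\Gamma)$, forcing $\bar g(t)=t_A(g)$ on any homomorphic extension of $g$. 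Conversely, with this definition: $\bar g$ is monotone by the inequations of type~\ref{D:var:1}; it is a homomorphism because for all $\Delta\in\Posf$, $\sigma\in T\Delta$ and monotone $k\colon\Delta\to T\Gamma$ one has
\[
  \bar g\bigl(\sigma_{T\Gamma}(k)\bigr)=\bar g\bigl(k^*(\sigma)\bigr)=g^\#\bigl(k^*(\sigma)\bigr)=g^\#\bigl(\sigma(k)\bigr)=\sigma_A\bigl(\bar g\cdot k\bigr),
\]
where the first equality is the definition of the structure on $T\Gamma$, the third is the inequation of type~\ref{D:var:2} satisfied by $A$ (and both sides are defined since $\bar g\cdot k$ is monotone), and the last unwinds \autoref{D:sharp}; and $\bar g\cdot\eta_\Gamma=g$ because the defining inequations of $\vt$ — instantiating type~\ref{D:var:2} at $k=\eta_\Gamma$ and using $\eta_\Gamma^*=\id$ — force each operation symbol $\eta_\Gamma(x)\in T\Gamma$ to be interpreted as the $x$-th coordinate projection in every $\vt$-algebra. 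Thus $T\Gamma$ with $\eta_\Gamma$ is the free $\vt$-algebra on $\Gamma$, naturally in $\Gamma\in\Posf$.

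To pass to an arbitrary poset $X$ I would invoke finitariness: $T$ is finitary by hypothesis, and $\T_\vt$ is finitary because the forgetful functor of a variety creates filtered colimits (\autoref{P:colim} and its corollary). Since $X$ is the filtered colimit of its finite sub-posets, both $TX$ and $T_\vt X$ are the corresponding filtered colimits of the $T\Gamma$ resp.\ $T_\vt\Gamma$, so the isomorphisms $T\Gamma\cong T_\vt\Gamma$ of the previous step assemble into an isomorphism $TX\cong T_\vt X$ natural in $X$. That this is a monad isomorphism is cleanest to verify through \autoref{R:KT}: one defines $\varphi_X\colon TX\to T_\vt X$ and checks the triangle and square of \autoref{R:KT}\ref{R:KT:2}, the square being exactly the homomorphism identity above combined with~\eqref{KT3}; alternatively, the units correspond (on both sides the universal arrow out of $X$ is insertion of generators, i.e.\ $\eta_X$ on the $\T$-side), whence multiplicativity is forced. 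Either way $\T\cong\T_\vt$, i.e.\ $\T$ is the free-algebra monad of $\vt$.

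I expect the crux to be the finite-context step, where the two inequation families and the Kleisli identities must be combined precisely: type~\ref{D:var:1} delivers monotonicity of $\bar g$, the equation $k^*(\sigma)=\sigma(k)$ of type~\ref{D:var:2} simultaneously guarantees definedness of the relevant evaluation maps and the homomorphism property, and its specialization at $k=\eta_\Gamma$ (via $\eta_\Gamma^*=\id$) is what pins down the unit law. Once $TX$ has been identified as the free algebra, the colimit argument for general $X$ is routine, and promoting the natural isomorphism of underlying functors to an isomorphism of monads is bookkeeping (or a direct check against \autoref{R:KT}).
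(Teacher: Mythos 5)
Your proposal reproduces the paper's argument essentially step for step: the reduction to showing that $TX$ with unit $\eta_X$ is a free algebra of $\vt$, the formula $\bar g(\sigma)=\sigma_A(g)$ forced by $\sigma=\eta_\Gamma^*(\sigma)=\sigma_{T\Gamma}(\eta_\Gamma)$, monotonicity of $\bar g$ from the type~\ref{D:var:1} inequations, the homomorphism property from the type~\ref{D:var:2} equations together with \autoref{D:sharp}, the passage from contexts to arbitrary posets by finitarity and creation of filtered colimits, and the final identification of the two monads via Kleisli triples (\autoref{R:KT}) are exactly the steps the paper takes, in the same order.

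The one place where you go beyond the paper is also the one place where your argument does not hold up: the unit law $\bar g\cdot\eta_\Gamma=g$. You claim that instantiating type~\ref{D:var:2} at $k=\eta_\Gamma$ and using \eqref{KT1} ``forces each operation symbol $\eta_\Gamma(x)$ to be interpreted as the $x$-th coordinate projection.'' That instance reads $\Gamma\vdash\sigma=\sigma(\eta_\Gamma)$, which in an algebra $A$ only yields $\sigma_A(g)=\sigma_A\bigl(x\mapsto(\eta_\Gamma(x))_A(g)\bigr)$; it does not pin down $(\eta_\Gamma(x))_A(g)=g(x)$. In fact no inequation of \autoref{D:var} has a bare variable as a whole side or as an argument of a compound term, so the projection law is not derivable from them: e.g.\ for $\T$ the identity monad, a two-element discrete poset with every operation interpreted as a fixed constant satisfies all inequations of $\vt$ yet interprets no $\eta_\Gamma(x)$ as a projection, and for such an algebra the required homomorphic extension of $g$ along $\eta_\Gamma$ need not exist. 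Closing this requires the projection equations $\Gamma\vdash\eta_\Gamma(x)=x$ as explicit additional axioms (or some genuinely different derivation). It is fair to add that the paper's own proof of part~(1a) checks uniqueness and the homomorphism square but never verifies $\bar f\cdot\eta_\Gamma=f$ at all, so you have correctly isolated the delicate point of the whole argument --- but the justification you give for it does not work.
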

\begin{proof}
  \begin{enumerate}
  \item\label{T:mon-var:1} We first prove that the algebra $TX$ of
    \autoref{E:TX} is a free algebra of $\vt$ w.r.t.~the monad unit
    $\eta_{X}\colon X\rightarrow TX$.
    \begin{enumerate}[label=(1\alph*)]
    \item First, suppose that $X=\Gamma$ is a context. Given an
      algebra $A$ of $\vt$ and a monotone map
      $f\colon\Gamma\rightarrow A$, we are to prove that there exists
      a unique homomorphism $\bar{f}\colon T\Gamma\rightarrow A$ such that
      $f=\bar{f}\cdot\eta$.

      Indeed, given $\sigma\in T\Gamma$, define $\bar{f}$ by
      \[
        \bar{f}(\sigma)=\sigma_A(f).
      \]
      This is a monotone function: if $\sigma\leq\tau$ in $T\Gamma$,
      then use the fact that $A$ satisfies the
      inequations~$\Gamma \vdash \sigma \leq \tau$ to obtain
      \[
        \sigma_A(f)=f^{\#}(\sigma)\leq f^{\#}(\tau)=\tau_A(f).
      \]
      We now verify that $\bar{f}$ is a homomorphism: given
      $\tau\in\Sigma_{\Delta}$, we will prove that the following
      square commutes:
      \[
        \begin{tikzcd}
          {\Posz(\Delta, T\Gamma)}
          \arrow[d, "\bar{f}\cdot (-)"']
          \arrow[r, "\tau_{T\Gamma}"]
          &
          T\Gamma \arrow[d, "\bar{f}"]
          \\
          {\Posz(\Delta, A)} \arrow[r, "\tau_A"']
          &
          A                           
        \end{tikzcd}
      \]
      Indeed, for every monotone map $k\colon\Delta\rightarrow
      T\Gamma$ we have that $f^\#$ is defined in
      $k^*(\tau)$ by \autoref{E:term}\ref{E:term:2}, and we therefore
      obtain (letting $|\Delta| = \{x_1, \ldots, x_n\}$):
      \begin{align*}
        \bar f(\tau_{T\Gamma}(k))
        &= \bar{f}(k^{*}(\tau))
        & \text{def.~of $\tau_{T\Gamma}$}
        \\
        &= (k^*(\tau))_A(f)
        &\text{def.~of $\bar f$}
        \\
        &= f^\#(k^*(\tau))
        & \text{by \autoref{D:sharp}}
        \\
        &= f^\#(\tau(\hat k))
        &
        \text{$A$ satisfies $\Gamma \vdash k^*(\tau) = \tau(\hat k)$}
        \\
        &= \tau_A(f^\#(k))
        & \text{def.~of $f^\#$}
        \\
        &= \tau_A(\bar f \cdot k).
      \end{align*}
      For the last
      step we use again the definition of $f^\#$ to obtain that for
      every $x \in |\Delta|$ the operation symbol $\sigma = k(x)$,
      considered as the term $\sigma(y_1, \ldots, y_k)$ where
      $|\Gamma| = \{y_1, \ldots, y_k\}$ (\autoref{def:terms}), satisfies
      \begin{align*}
        f^\# (\sigma(y_1, \ldots,y_k)) &= \sigma_A (f^\#(y_1),
        \ldots, f^\#(y_k)) = \sigma_A (f(y_1), \ldots, f(y_k)) \\
        & = \sigma_A (f) = \bar f(\sigma_i).
      \end{align*}
      Since $\sigma = k(x_i)$ this gives the desired
      $\bar f \cdot k$ when we let $x$ range over $\Delta$.
      \takeout{
      The left-hand side is $\tau_A(g)$ where
      $g\colon\Delta\rightarrow A$ is given by $g(x)=(k(x))_A(f)$. For
      the evaluation map $f^{\#}\colon\TermG\to |A|$,
      this is precisely $f^{\#}(\tau_{T\Gamma}(k))$ (see
      \autoref{E:term}\ref{E:term:2}).  For the right-hand side we
      have, following \autoref{D:sharp}:
      \[
        \bar{f}(k^*(\tau))=(k^*(\tau))_A(f)=f^{\#}(k^*(\tau)).
      \]
      This is the same result because $A$ satisfies
      $\Gamma\vdash k^*(\tau)=\tau(k)$ and $f^\#$ is defined in
      $k^*(\tau)$ by \autoref{E:term}\ref{E:term:2}.}

      As for uniqueness, suppose that
      $\bar{f}\colon T\Gamma\rightarrow A$ is a homomorphism such that
      $f=\bar{f}\cdot\eta_{\Gamma}$. The above square commutes for
      $\Delta=\Gamma$ which applied to
      $\eta_{\Gamma}\in\Pos(\Gamma, T\Gamma)$ yields for every $\sigma
      \in |T\Gamma|$:
      \begin{align*}
        \bar{f}(\sigma)
        &= \bar f(\eta_\Gamma^*(\sigma))
        & \text{by~\eqref{KT1}}
        \\
        &= \bar f (\eta_\Gamma^\#(\sigma))
        &\text{by~\eqref{eq:fsigma}}
        \\
        &= \bar{f}(\sigma_{T\Gamma}(\eta_{\Gamma}))
        & \text{def.~of $\eta_\Gamma^\#$}
        \\
        & =\sigma_A(\bar{f}\cdot\eta_{\Gamma})
        & \text{$\bar f$ homomorphism}
        \\
        &= \sigma_A(f)
        & \text{since $\bar f\cdot\eta_\Gamma = f$},
      \end{align*}
      as required.

    \item Now, let $X$ be an arbitrary poset. Express it as a filtered
      colimit $X=\colim_{i\in I}\Gamma_i$ of contexts. The free
      algebra on $X$ is then a filtered colimit of the corresponding
      diagram of the $\Sigma$-algebras $T\Gamma_i$ ($i\in I$). Indeed,
      that $TX=\colim T\Gamma_i$ in $\Pos$ follows from $T$ preserving
      filtered colimits. That this colimit lifts to $\V$ follows
      from the forgetful functor of $\V$ creating filtered colimits,
      see \autoref{P:colim}.
\end{enumerate}

\item To conclude the proof, we apply \autoref{R:KT}. Our given monad
  and the monad $\T_{\V}$ of the associated variety share the same
  object assignment $X\mapsto TX= T_{\V}X$ for an arbitrary poset $X$,
  and the same universal map $\eta_X$, as shown in
  part~\ref{T:mon-var:1}. It remains to prove that for every morphism
  $f\colon X\rightarrow TY$ in $\Pos$ the homomorphism
  $h^*=\mu_{Y}\cdot Th$ extending $h$ in $\Pos^{\T}$ is a
  $\Sigma$-homomorphism $h^*\colon TX\rightarrow TY$ of the
  corresponding $\Sigma$-algebras of \autoref{E:TX}. Then $\T$ and
  $\tv$ also share the operator $h\mapsto h^*$.  Thus given
  $\sigma\in\Sigma_{\Gamma}$ we are to prove that the following square
  commutes:
\[
\begin{tikzcd}
{\Posz(\Gamma, TX)} \arrow[d, "h^*\cdot (-)"'] \arrow[r, "\sigma_{TX}"] & TX \arrow[d, "h^*"] \\
{\Posz(\Gamma, TY)} \arrow[r, "\sigma_{TY}"']                      & TY                 
\end{tikzcd}
\]
Indeed, given $f\colon\Gamma\rightarrow TX$ we have 
\begin{align*}
  h^*\cdot \sigma_{TX}(f)
  &=h^*\cdot f^*(\sigma) & \text{definition of $\sigma_A$} \\
  &=(h^*\cdot f)^*(\sigma) & \text{equation~\eqref{KT3}} \\ 
  &=\sigma_{TY}(h^*\cdot f) & \text{definition of $\sigma_{TY}$}
\end{align*}
This completes the proof. \qedhere
\end{enumerate}
\end{proof}

\begin{corollary}\label{C:nonenriched}
  Finitary monads on $\Pos$ correspond bijectively, up to monad isomorphism, 
  to finitary varieties of ordered algebras.
\end{corollary}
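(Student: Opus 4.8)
The plan is to assemble the bijection from the two constructions already in hand, $\V\mapsto\T_\V$ and $\T\mapsto\vt$; the only step carrying genuine (if small) content is verifying that $\V\mapsto\T_\V$ really lands among \emph{finitary} monads. For a variety $\V$, its forgetful functor $U\colon\V\to\Pos$ is monadic by \autoref{T:varmon}; let $F$ be its left adjoint, so the underlying endofunctor of $\T_\V$ is $T_\V=U\cdot F$. By the corollary following \autoref{P:colim}, $U$ creates, hence preserves, filtered colimits, and $F$ preserves all colimits as a left adjoint; therefore $T_\V=U\cdot F$ preserves filtered colimits, i.e.\ $\T_\V$ is finitary. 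Thus $\V\mapsto\T_\V$ is a well-defined map from varieties to finitary monads on $\Pos$.

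In the other direction, \autoref{D:var} assigns to every finitary monad $\T$ the variety $\vt$, and \autoref{T:mon-var} asserts precisely that $\T\cong\T_{\vt}$ as monads; so every finitary monad on $\Pos$ is, up to isomorphism, the free-algebra monad of a variety. To get an honest bijection I would identify two varieties when they are isomorphic as concrete categories over $\Pos$. This is the right equivalence because, by monadicity, a variety $\V$ is recovered from $\T_\V$ up to such isomorphism: $\V\cong\Pos^{\T_\V}$ by \autoref{C:varmon}, and $\Pos^{\T}\cong\Pos^{\T'}$ over $\Pos$ exactly when $\T\cong\T'$ as monads (a concrete isomorphism of Eilenberg--Moore categories over $\Pos$ commutes with the forgetful functors, hence with their left adjoints, hence induces a monad isomorphism).

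It remains to check that the two assignments are mutually inverse modulo this identification. Given a finitary monad $\T$, \autoref{T:mon-var} gives $\T_{\vt}\cong\T$. Given a variety $\V$, write $\V'$ for the variety associated via \autoref{D:var} to the finitary monad $\T_\V$; by \autoref{T:mon-var}, $\T_{\V'}\cong\T_\V$, whence $\V'\cong\Pos^{\T_{\V'}}\cong\Pos^{\T_\V}\cong\V$, all concretely over $\Pos$. Hence $\V\mapsto\T_\V$ and $\T\mapsto\vt$ descend to mutually inverse bijections between isomorphism classes of finitary monads on $\Pos$ and finitary varieties of ordered algebras taken up to concrete isomorphism. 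I do not anticipate any real obstacle: apart from the short adjointness argument for finitariness, the whole statement is bookkeeping on top of \autoref{T:varmon}, \autoref{C:varmon}, \autoref{P:colim}, and \autoref{T:mon-var}, with the last of these carrying all the genuine work.
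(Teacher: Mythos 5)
Your argument is correct and follows essentially the same route as the paper, which likewise deduces the bijection from \autoref{C:varmon} ($\V\cong\Pos^{\T_\V}$ concretely over $\Pos$) in one direction and \autoref{T:mon-var} ($\T\cong\T_{\vt}$) in the other. Your explicit check that $\T_\V$ is finitary (via $U$ creating filtered colimits and $F$ preserving all colimits) is a welcome addition that the paper leaves implicit, having only asserted it in the introduction.
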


\noindent Indeed, the assignment of the associated variety $\vt$ to
every finitary monad $\T$ is essentially inverse to the asignment of
the free-algebras monad $\T_{\V}$ to every variety $\V$. To see this,
recall that every variety $\V$ is isomorphic (as a concrete category
over $\Pos$) to the category $\Pos^{\T_{\V}}$
(\autoref{C:varmon}). Conversely, every finitary monad $\T$ is
isomorphic to $\T_{\V}$ for the associated variety
(\autoref{T:mon-var}).

\begin{proposition}\label{P:cohvar}
  If $\T$ is an enriched finitary monad on $\Pos$, then the algebras
  of its associated variety $\V_\T$ are coherent. Conversely, for
  every variety $\V$ of coherent algebras, the free-algebra monad
  $\T_{\V}$ is enriched.
\end{proposition}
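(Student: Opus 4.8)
The plan is to prove the two implications separately, each reducing to an explicit description of free algebras; throughout one keeps in mind that coherence of an algebra $A$ concerns the operations $\sigma_A$ as maps out of the \emph{ordered} hom-posets $\Pos(\Gamma,A)$, there being nothing to check on the discrete hom-sets $\Posz(\Gamma,A)$ appearing in \autoref{D:var}. For the first implication, assume $\T$ is enriched, so $T$ is locally monotone. I would begin by showing that for every poset $X$ the $\Sigma$-algebra on $TX$ described in \autoref{E:TX} is coherent: given $\sigma\in\Sigma_\Gamma=|T\Gamma|$ and $f\le g$ in $\Pos(\Gamma,TX)$, local monotonicity of $T$ gives $Tf\le Tg$, and postcomposing with the monotone map $\mu_X$ yields $f^*=\mu_X\cdot Tf\le\mu_X\cdot Tg=g^*$, whence $\sigma_{TX}(f)=f^*(\sigma)\le g^*(\sigma)=\sigma_{TX}(g)$. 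By \autoref{T:mon-var} this algebra is precisely the free $\vt$-algebra on $X$; let $\varepsilon_A\colon TA\to A$ (for $A\in\vt$) denote the counit of the free--forgetful adjunction, a $\Sigma$-homomorphism with $\varepsilon_A\cdot\eta_A=\id_A$. Now take $A\in\vt$, $\sigma\in\Sigma_\Gamma$ and $f\le g$ in $\Pos(\Gamma,A)$ arbitrary. Since $\eta_A$ is monotone we have $\eta_A\cdot f\le\eta_A\cdot g$ in $\Pos(\Gamma,TA)$, so coherence of the free algebra $TA$ gives $\sigma_{TA}(\eta_A\cdot f)\le\sigma_{TA}(\eta_A\cdot g)$; applying the monotone homomorphism $\varepsilon_A$, using its homomorphism square together with $\varepsilon_A\cdot\eta_A=\id_A$, we obtain
\[
  \sigma_A(f)=\varepsilon_A\bigl(\sigma_{TA}(\eta_A\cdot f)\bigr)\le\varepsilon_A\bigl(\sigma_{TA}(\eta_A\cdot g)\bigr)=\sigma_A(g),
\]
so $A$ is coherent.

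For the converse, let $\V$ be a variety of coherent $\Sigma$-algebras. First, the coherent polynomial functor $K_\Sigma$ is locally monotone: for $h\le h'\colon X\to Y$ and $(\sigma,f)\in K_\Sigma X$ one has $h\cdot f\le h'\cdot f$ in $\Pos(\Gamma,Y)$, hence $K_\Sigma h(\sigma,f)=(\sigma,h\cdot f)\le(\sigma,h'\cdot f)=K_\Sigma h'(\sigma,f)$. By the remark in \autoref{S:eqpres} stating that the free monad on a locally monotone finitary endofunctor is again locally monotone, together with \autoref{P:freec} (which identifies $\TcSigma$ with that free monad $\widehat{K_\Sigma}$), the monad $\TcSigma$ is locally monotone. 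Next, exactly as in \autoref{P:refl}, $\V$ is closed under subalgebras inside $\AlgcSigma$; hence the $\Sigma$-homomorphism $q_X\colon\TcSigma X\to T_{\V}X$ induced (by the universal property of the free coherent $\Sigma$-algebra $\TcSigma X$) from the unit $X\to T_{\V}X$ is surjective — its image is a subalgebra of $T_{\V}X$ lying in $\V$ and containing the image of the generating unit, hence all of $T_{\V}X$ by freeness — and the maps $q_X$ form a natural transformation $\TcSigma\Rightarrow T_{\V}$. Given $f\le g\colon X\to Y$, local monotonicity of $\TcSigma$ gives $\TcSigma f\le\TcSigma g$, so $q_Y\cdot\TcSigma f\le q_Y\cdot\TcSigma g$, and naturality turns this into $T_{\V}f\cdot q_X\le T_{\V}g\cdot q_X$; since $q_X$ is surjective this forces $T_{\V}f\le T_{\V}g$. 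Thus $\T_{\V}$ is enriched.

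I do not expect a serious obstacle: both directions are diagram chases once the explicit free algebras are in hand. The points needing a little care are bookkeeping ones — that the free $\vt$-algebra really is the concrete algebra of \autoref{E:TX} (this is \autoref{T:mon-var}), that coherence is a condition on the ordered hom-posets and is vacuous on the discrete hom-sets, and, in the converse direction, that $\TcSigma$ is locally monotone and that the comparison map $q_X$ out of the free coherent algebra is onto, the latter relying, as in \autoref{P:refl}, on closure of $\V$ under subalgebras in $\AlgcSigma$.
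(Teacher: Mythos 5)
Your first direction is correct and is essentially the paper's own argument: one shows that the algebra $TX$ of \autoref{E:TX} is coherent using local monotonicity of $T$ (for $f\le g$ in $\Pos(\Gamma,TX)$ one gets $f^*=\mu_X\cdot Tf\le\mu_X\cdot Tg=g^*$), and then pushes coherence down to an arbitrary $A\in\vt$ along the retraction $TA\to A$ via its homomorphism square. This half needs no changes.

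The converse direction has a genuine gap at the claim that $q_X\colon\TcSigma X\to T_\V X$ is surjective because ``its image is a subalgebra of $T_\V X$ containing the image of the generating unit.'' In this setting the image of a homomorphism need \emph{not} be a subalgebra: to show $\mathrm{im}(q_X)$ is closed under an operation $\sigma$ of arity $\Gamma$ you must, given a monotone $h\colon\Gamma\to T_\V X$ with values in $\mathrm{im}(q_X)$, produce a \emph{monotone} lift $\bar h\colon\Gamma\to\TcSigma X$ with $q_X\cdot\bar h=h$, and such lifts generally do not exist because the inequations of $\V$ create comparabilities in $T_\V X$ that have no counterpart in $\TcSigma X$. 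Concretely, take the signature of \autoref{E:lin} and the variety of coherent algebras satisfying $\{x\}\vdash x\le @x$, with $X=\{a\}$ discrete. The order on $\TcSigma X$ is then discrete, so the only terms involving $+$ are the diagonal ones $t+t$; but in $T_\V X$ we have $\eta(a)\le @\eta(a)$, so $T_\V X$ contains the element $\eta(a)+@\eta(a)$, and nothing identifies it with the interpretation of any term of $\TcSigma X$. Hence $q_X$ fails to be surjective and the transfer of local monotonicity along $q$ breaks down. The paper's proof avoids this entirely: for fixed $f_1\le f_2$ it considers the subset $E=\{t\in T_\V X : T_\V f_1(t)\le T_\V f_2(t)\}$, verifies \emph{directly} that $E$ is a subalgebra containing $\eta_X[X]$ --- the key step being $T_\V f_i(\sigma_{T_\V X}(h))=\sigma_{T_\V Y}(T_\V f_i\cdot h)$ together with monotonicity of $\sigma_{T_\V Y}$, which is coherence of $T_\V Y$ --- and then concludes $E=T_\V X$ because the free algebra has no proper subalgebra containing its generators. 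Your argument can be repaired by replacing the surjectivity claim with a subalgebra argument of this kind carried out inside $T_\V X$ itself; as written, the second half does not go through.
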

\begin{proof}
  For the first claim, let $\T$ be enriched. 
  Then the $\Sigma$-algebra $TX$ of \autoref{E:TX} is
  coherent: Given an operation symbol $\sigma\in\Sigma_{\Gamma}$ and
  monotone interpretations $f\leq g$ in $\Pos(\Gamma, TX)$, we have
  $Tf\leq Tg$, and hence
  $f^*=\mu_{TX}\cdot Tf\leq\mu_{TX}\cdot Tg=g^*$ because~$\T$ is
  enriched. Therefore, $f^*(\sigma)\leq g^*(\sigma)$. That is,
  \[
    \sigma_{TX}(f)\leq\sigma_{TX}(g).
  \]
  For every algebra $A$ of the variety $\vt$ we have the unique
  $\Sigma$-homomorphism $k\colon TA\rightarrow A$ such that
  $k\cdot\eta_A=\id_A$ (since $TA$ is a free $\Sigma$-algebra
  in $\vt$; see \autoref{T:mon-var}\ref{T:mon-var:1}). The coherence
  of $TA$ implies the coherence of $A$: given $f_1\leq f_2$ in
  $\Pos(\Gamma, A)$, we verify $\sigma_A(f_1)\leq\sigma_A(f_2)$ by
  applying the commutative square 
  \[
    \begin{tikzcd}
      {\Pos(\Gamma, TA)} \arrow[r, "\sigma_{TA}"] \arrow[d,
      "k\cdot (-)"']
      &
      TA \arrow[d, "k"]
      \\
      {\Pos(\Gamma, A)} \arrow[r, "\sigma_A"']
      &
      A                        
    \end{tikzcd}
  \]
  to $\eta_A\cdot f_i$, obtaining
  $ \sigma_A(f_i) = \sigma_A(k\cdot \eta_A\cdot f_i) = k\cdot
  \sigma_{TA}(\eta_A\cdot f_i)$; by monotonicity of composition in
  $\Pos$ and of $\sigma_{TA}$ as established above, this implies
  $\sigma_A(f_1)\leq\sigma_A(f_2)$ as desired.

  Conversely, let $\V$ be a variety of coherent
  $\Sigma$-algebras. Given $f_1\leq f_2$ in $\Pos(X, Y)$, we prove
  that the free-algebra monad $\tv$ fulfils $T_{\V}f_1\leq
  T_{\V}f_2$. Let $e\colon E\hookrightarrow T_\V X$ be the subposet of
  all elements $t\in|T_{\V}X|$ such that
  $T_{\V}f_1(t)\leq T_{\V}f_2(t)$.  Since for $x\in X$ we know that
  $f_1(x)\leq f_2(x)$, the poset $E$ contains all elements
  $\eta_X(x)$. Moreover, $E$ is closed under the operations of
  $T_{\V}X$: Suppose that $\sigma\in\Sigma_{\Gamma}$ and that
  $h\colon\Gamma\rightarrow T_{\V}X$ is a monotone map such that
  $h[\Gamma]\subseteq E$; we have to show that
  $\sigma_{T_{\V}X}(h)\in E$.  Applying the  commutative
  square
  \[
    \begin{tikzcd}
      {\Pos(\Gamma, T_{\V}X)} \arrow[r, "\sigma_{T_{\V}X}"] \arrow[d,
      "T_{\V}f_i\cdot (-)"']
      &
      T_{\V}X \arrow[d, "T_{\V}f_i"]
      \\
      {\Pos(\Gamma, T_{\V}Y)} \arrow[r, "\sigma_{T_{\V}Y}"']
      &
      T_{\V}Y                                
    \end{tikzcd}
  \]
  to $h$, we obtain 
  \begin{align*}
    T_{\V}f_1(\sigma_{T_{\V}X}(h))
    &= \sigma_{T_{\V}Y}(T_{\V}f_1\cdot h) \\
    &\leq \sigma_{T_{\V}Y}(T_{\V}f_2\cdot h) \\
    &= T_{\V}f_2(\sigma_{T_{\V}X}(h))
  \end{align*}
  using in the inequality that $\sigma_{T_{\V}Y}$ is monotone and, by
  assumption, $T_{\V}f_1(h)\leq T_{\V}f_2(h)$; that is,
  $\sigma_{T_{\V}X}(h)\in E$, as desired.

  We thus see that $E$ is a $\Sigma$-subalgebra of $T_{\V}X$. Since
  $T_{\V}X$ is the free algebra of $\V$ w.r.t.~$\eta_X$ and the
  subalgebra $E$ contains $\eta_X[X]$, it follows that
  $E=T_{\V}X$. This proves that $Tf_1\leq Tf_2$, as desired.
\end{proof}

\begin{corollary}\label{C:enriched}
  Enriched finitary monads on $\Pos$ correspond bijectively, up to
  monad isomorphism, to finitary varieties of coherent ordered algebras.
\end{corollary}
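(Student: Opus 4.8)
The plan is to obtain this corollary simply by cutting down the bijection of \autoref{C:nonenriched} to the appropriate subclasses on each side. Recall that \autoref{C:nonenriched} provides mutually inverse assignments $\T\mapsto\vt$ and $\V\mapsto\T_\V$, up to monad isomorphism, between finitary monads on $\Pos$ and varieties of ordered algebras: indeed $\T\cong\T_{\vt}$ by \autoref{T:mon-var}, and $\V\cong\Pos^{\T_\V}\cong\V_{\T_\V}$ as concrete categories over $\Pos$ by \autoref{C:varmon}. So it suffices to check that this bijection restricts to one between enriched finitary monads and varieties of coherent algebras, and for that both halves of \autoref{P:cohvar} are exactly what is needed.

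Concretely, first I would take an enriched finitary monad $\T$ and use the first claim of \autoref{P:cohvar} to see that every algebra of $\vt$ is coherent. Since $\AlgcSigma$ is (concretely isomorphic to) the full subcategory of $\AlgSigma$ spanned by the coherent algebras, $\vt$ --- which is cut out of $\AlgSigma$ by the inequations in context of \autoref{D:var} --- is thereby a full subcategory of $\AlgcSigma$ specified by inequations in context, i.e.\ a variety of coherent algebras. Conversely, starting from a variety $\V$ of coherent algebras, I would adjoin to its presentation the coherence-forcing inequations of \autoref{ex:var}\ref{ex:var:3}, so that $\V$ is at the same time a variety of $\Sigma$-algebras with the very same underlying category and forgetful functor to $\Pos$, hence the same free-algebra monad $\T_\V$; the second claim of \autoref{P:cohvar} then says $\T_\V$ is enriched.

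It remains to assemble these: a pair of mutually inverse maps each of which carries a distinguished subclass of its domain into the corresponding subclass of its codomain automatically restricts to a pair of mutually inverse bijections between those subclasses, so combining the two directions above with \autoref{C:nonenriched} yields the claimed bijection. I do not anticipate a genuine obstacle here; the one point that deserves a sentence of care is the identification of a variety of coherent $\Sigma$-algebras (a full subcategory of $\AlgcSigma$ defined by inequations) with a variety of $\Sigma$-algebras all of whose objects are coherent --- which is immediate since $\AlgcSigma\hookrightarrow\AlgSigma$ is full and coherence is itself inequationally definable via \autoref{ex:var}\ref{ex:var:3}.
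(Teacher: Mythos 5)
Your proposal is correct and follows essentially the same route as the paper: the corollary is obtained by restricting the bijection of \autoref{C:nonenriched} to the relevant subclasses, with both halves of \autoref{P:cohvar} supplying exactly the two containments needed. Your additional remark that varieties of coherent algebras can be re-presented as varieties of $\Sigma$-algebras via the coherence inequations of \autoref{ex:var}\ref{ex:var:3} is a sensible point of care that the paper leaves implicit.
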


\section{Enriched Lawvere Theories}\label{S:enriched}

Power~\cite{Pow99} proves that enriched finitary monads on $\Pos$
bijectively correspond to Lawvere $\Pos$-theories. This is another way
of proving \autoref{C:enriched}. However, we believe that a precise
verification of all details would not be simpler than our proof. Here
we indicate this alternative proof.

Dual to \autoref{R:tensor}, \emph{cotensors} $P \cotensor X$ in an
enriched category $\Tcat$ (over $\Pos$) are characterized by an
enriched natural isomorphism $\Tcat(-,P\cotensor X) \cong \Pos(P,
\Tcat(-,X))$. If we restrict ourselves to finite posets $P$ we speak
about \emph{finite cotensors}. 
\removeThmBraces
\begin{definition}[{\cite{Pow99}}]
  A \emph{Lawvere $\Pos$-theory} is a small enriched category
  $\Tcat$ with finite cotensors together with an enriched
  identity-on-objects functor $\iota\colon \Posf^{\op} \to \Tcat$
  which preserves finite cotensors.
\end{definition}
\resetCurThmBraces
\begin{example}\label{E:TV}
  Let $\V$ be a variety, and denote by $\T_\V$ its free-algebra monad
  on $\Pos$. The following theory $\Tcat_\V$ is the restriction of the
  Kleisli category of $\T_\V$ to $\Posf$: objects are all contexts,
  and morphisms from $\Gamma$ to $\Gamma'$ form the poset $\Pos(\Gamma',
  T_\V \Gamma)$. A composite of $f\colon \Gamma' \to T_\V\Gamma$ and
  $g\colon \Gamma'' \to T_\V\Gamma'$ is $f^*\cdot g\colon \Gamma'' \to
  T_\V\Gamma$ where $(-)^*$ is the Kleisli extension (see
  \autoref{R:KT}\ref{R:KT:3}). 
\end{example}
\removeThmBraces
\begin{theorem}[{\cite[Thm.~4.3]{Pow99}}]\label{T:Pow}
    There is a bijective correspondence between enriched finitary
    monads on $\Pos$ and Lawvere $\Pos$-theories.
\end{theorem}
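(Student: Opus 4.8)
The plan is to construct explicit passages in both directions and verify they are mutually inverse. Given an enriched finitary monad $\T$ on $\Pos$, the associated theory is the one of \autoref{E:TV} applied to $\vt$, which can be described directly: $\Tcat_\T$ has the contexts as objects, hom-posets $\Tcat_\T(\Gamma,\Gamma') = \Pos(\Gamma', T\Gamma)$, Kleisli composition, and the identity-on-objects functor $\iota\colon\Posf^{\op}\to\Tcat_\T$ sends a monotone map $u\colon\Gamma\to\Gamma'$ to $\eta_{\Gamma'}\cdot u$. First I would check that $\Tcat_\T$ is indeed a Lawvere $\Pos$-theory: its finite cotensors are computed as products of contexts, $P\cotensor\Gamma = P\times\Gamma$, using the isomorphism $\Tcat_\T(\Gamma', P\times\Gamma) = \Pos(P\times\Gamma, T\Gamma') \cong \Pos(P, \Pos(\Gamma, T\Gamma')) = \Pos(P,\Tcat_\T(\Gamma',\Gamma))$; and $\iota$ preserves these since the cotensor in $\Posf^{\op}$ is also the product $P\times\Gamma$, which stays in $\Posf$. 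Here enrichedness of $\T$ is exactly what makes Kleisli composition monotone in each variable, so that $\Tcat_\T$ is genuinely $\Pos$-enriched.

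Conversely, given a Lawvere $\Pos$-theory $(\Tcat,\iota)$, I would form the category $\Mod(\Tcat)$ of finite-cotensor-preserving enriched functors $\Tcat\to\Pos$ with enriched natural transformations, together with the evaluation functor $\Mod(\Tcat)\to\Pos$, $M\mapsto M\mathbbm{1}$, where $\mathbbm{1}$ is the one-point poset. The key preliminary observation is that in $\Posf^{\op}$ every object is a finite cotensor of $\mathbbm{1}$, namely $\Gamma=\Gamma\cotensor\mathbbm{1}$; since $\iota$ is identity-on-objects and preserves finite cotensors, the same holds in $\Tcat$, whence each model satisfies $M\Gamma\cong\Pos(\Gamma,M\mathbbm{1})$ canonically. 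Using this one shows that evaluation at $\mathbbm{1}$ is monadic: the free model on a context $\Gamma$ is the representable $\Tcat(\Gamma,-)$ (which preserves finite cotensors, these being limits), so by enriched Yoneda $\Mod(\Tcat)(\Tcat(\Gamma,-),M)\cong M\Gamma\cong\Pos(\Gamma,M\mathbbm{1})$, and the induced monad $\T'$ has $T'\Gamma=\Tcat(\Gamma,\mathbbm{1})$ as a poset; it is enriched and, since the arities $\Gamma$ are finite posets and hence finitely presentable in $\Pos$, finitary. Alternatively one can sidestep the general enriched machinery and identify $\Mod(\Tcat)$ with the variety having $\Sigma_\Gamma = |\Tcat(\Gamma,\mathbbm{1})|$ and the inequations of \autoref{D:var} read off from the order and composition of $\Tcat$, then invoke \autoref{T:varmon} and \autoref{C:varmon}.

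It then remains to see the two passages are mutually inverse up to isomorphism. For $\Tcat\mapsto\Mod(\Tcat)\mapsto\Tcat'$: the Yoneda computation above gives $\Tcat'(\Gamma,\Gamma') = \Pos(\Gamma', T'\Gamma) = \Pos(\Gamma',\Tcat(\Gamma,\mathbbm{1})) \cong \Tcat(\Gamma,\Gamma'\cotensor\mathbbm{1}) = \Tcat(\Gamma,\Gamma')$, and one checks this is compatible with composition and with $\iota$, so $\Tcat'\cong\Tcat$ as Lawvere $\Pos$-theories. For $\T\mapsto\Tcat_\T\mapsto\T'$ one shows that a finite-cotensor-preserving enriched functor $\Tcat_\T\to\Pos$ is precisely a $\T$-algebra, i.e.\ $\Mod(\Tcat_\T)\cong\Pos^\T$ as concrete categories over $\Pos$, whence $\T'=\T$. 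This last isomorphism is the main obstacle: it is the enriched, finitary incarnation of the classical fact that the Eilenberg–Moore category is recovered as the category of models of the Kleisli theory, and carrying it out rigorously requires using that $\T$ is finitary (so that an algebra structure is determined by its restriction to the contexts $\Posf$, which are dense in $\Pos$ via filtered colimits) and that the whole correspondence respects the $\Pos$-enrichment. Once this is in place, together with \autoref{C:varmon} and \autoref{T:mon-var}, the correspondence $\T\leftrightarrow\Tcat_\T$ is a bijection up to isomorphism, which is \autoref{T:Pow}; the upgrade to functoriality (monad morphisms versus theory morphisms) is an entirely analogous but bookkeeping-heavy verification, and is the reason why only an indication of this route is given here.
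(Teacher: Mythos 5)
The paper does not actually prove this statement: it is quoted verbatim from Power~\cite[Thm.~4.3]{Pow99}, and the authors explicitly decline to reproduce the argument, remarking only that ``a precise verification of all details would not be simpler than our proof.'' So there is no in-paper proof to compare against; what can be compared is the surrounding discussion (\autoref{E:TV} and the remark on models), and your two passages match it exactly: monad $\mapsto$ restricted Kleisli theory with $\Tcat_\T(\Gamma,\Gamma')=\Pos(\Gamma',T\Gamma)$ and cotensors given by finite products, theory $\mapsto$ category of cotensor-preserving models with evaluation at $\mathbbm 1$. This is also the shape of Power's original argument, and the individual computations you give (cotensors in $\Tcat_\T$ via cartesian closedness, $M\Gamma\cong\Pos(\Gamma,M\mathbbm 1)$ from $\Gamma=\Gamma\cotensor\mathbbm 1$, the Yoneda identification of free models on contexts, and the round-trip $\Tcat'(\Gamma,\Gamma')\cong\Tcat(\Gamma,\Gamma')$) are all correct.

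As a proof, however, the proposal stops short of the two points that carry essentially all the weight, and you should be aware that acknowledging them does not discharge them. First, from a theory $\Tcat$ you only construct $T'$ on the objects of $\Posf$; to obtain a finitary enriched monad on all of $\Pos$ you must produce free models on arbitrary posets and verify that the resulting left adjoint to evaluation at $\mathbbm 1$ is finitary and locally monotone --- Power does this via local finite presentability of $\Mod\Tcat$, while your alternative route through the variety with $\Sigma_\Gamma=|\Tcat(\Gamma,\mathbbm 1)|$ would in effect re-run \autoref{T:varmon} and \autoref{T:mon-var} (which is precisely why the paper judges the two proofs comparable in length). Second, the identification $\Mod(\Tcat_\T)\simeq\Pos^\T$, equivalently that the round trip $\T\mapsto\Tcat_\T\mapsto\T'$ is the identity up to isomorphism, is the crux of the theorem and is only named, not proved; without it you have two constructions but no bijection. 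So the outline is sound and faithful to the cited source, but as submitted it is an indication of a proof in the same sense as the paper's own Section~5, not a proof.
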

\resetCurThmBraces
\begin{example}\label{E:Powproof}
  By inspecting Power's proof, we see that for the theory $\Tcat_\V$
  of \autoref{E:TV}, the corresponding monad is precisely the
  free-algebra monad $\T_\V$.
\end{example}
\begin{remark}
  With every Lawvere $\Pos$-theory $\Tcat$, Power associates the category
  $\Mod \Tcat$ of \emph{models}, which are enriched functors $\bar A\colon \Tcat \to
  \Pos$ preserving finite cotensors. Morphisms are all enriched
  natural transformations between models.

  In \autoref{E:TV}, every algebra $A$ of $\V$ yields a model $\bar A$
  of $\Tcat_\V$ by putting $\bar A(\Gamma) = \V(T_\V\Gamma, A)$ and
  for $f\colon \Gamma' \to T_\V\Gamma$ we have
  \[
    \bar A(f) = f^*\cdot (-)\colon \V(T_\V\Gamma, A) \to \V(T_\V\Gamma', A).
  \]
  The proof of \autoref{T:Pow} implies that these are, up to
  isomorphism, all models of $\Tcat_\V$ and this yields an equivalence
  between $\V$ and $\Mod\Tcat_\V$.
\end{remark}
Thus, \autoref{C:enriched} can be proved by verifying that every
Lawvere $\Pos$-theory $\Tcat$ is naturally isomorphic to $\Tcat_\V$
for a variety of algebras, and the passage from $\T$ to $\V$ is
inverse to the passage $\V \mapsto \Tcat_\V$ of \autoref{E:Powproof}.

\smnote{The following should not be in a numbered remark to be skipped
  by readers, but narrative.}  In addition, Nishizawa and
Power~\cite{NP09} generalize the concept of Lawvere theory to a
setting in which one may obtain an alternative proof of the
non-coherent case (\autoref{C:nonenriched}); we briefly indicate
how. Again we believe that that proof would not be simpler than
ours. The setting of op.\ cit.\ includes a symmetric monoidal closed
category $\V$ that is locally finitely presentable in the enriched
sense and a locally finitely presentable $\V$-category $\A$. For our
purposes, $\V = \Set$ and $\A = \Pos$.
\removeThmBraces
\begin{definition}[{\cite[Def.~2.1]{NP09}}]\label{D:Lawth}
  A \emph{Lawvere $\Pos$-theory} for $\V = \Set$ is a small ordinary
  category $\Tcat$ together with an ordinary identity-on-objects functor $\iota\colon
  \Posf^\op \to \Tcat$ preserving finite limits.
\end{definition}
\resetCurThmBraces
\begin{example}
  Every variety of (not necessarily coherent) algebras yields a theory
  $\Tcat$ analogous to \autoref{E:TV}: the hom-set
  $\Tcat(\Gamma,\Gamma')$ is $\Posz(\Gamma', T_\V \Gamma)$.
\end{example}
\begin{remark}
  Here, a model of a theory $\Tcat$ is an ordinary functor $A\colon
  \Tcat \to \Set$ such that $A \cdot \iota\colon \Posf^\op \to \Set$
  is naturally isomorphic to $\Pos(-,X)/\Posf^\op$ for some poset
  $X$. The category $\Mod \Tcat$ of models has ordinary natural
  transformations as morphisms.
\end{remark}
\removeThmBraces
\begin{theorem}[{\cite[Cor.~5.2]{NP09}}]
  There is a bijective correspondence between ordinary finitary monads
  on $\Pos$ an Lawvere $\Pos$-theories in the sense of \autoref{D:Lawth}.
\end{theorem}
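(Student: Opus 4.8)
The plan is to obtain this statement as the instance $\Vcat=\Set$,
  $\A=\Pos$ of the general correspondence of Nishizawa and
  Power~\cite[Cor.~5.2]{NP09}; here we merely indicate the two passages
  and why they are mutually inverse, in parallel with the enriched
  case (\autoref{T:Pow} and \autoref{E:TV}).

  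\emph{From monads to theories.} Given a finitary monad $\T$ on
  $\Pos$, let $\Tcat_\T$ be the full subcategory of the Kleisli
  category of $\T$ spanned by the contexts: the hom-set
  $\Tcat_\T(\Gamma,\Gamma')$ is $\Posz(\Gamma',T\Gamma)$, the composite
  of $f\colon\Gamma'\to T\Gamma$ with $g\colon\Gamma''\to T\Gamma'$ is
  $f^*\cdot g$, and $\iota\colon\Posf^\op\to\Tcat_\T$ is the identity
  on objects and sends a monotone map $h\colon\Gamma'\to\Gamma$ (that
  is, a morphism $\Gamma\to\Gamma'$ of $\Posf^\op$) to
  $\eta_\Gamma\cdot h\in\Posz(\Gamma',T\Gamma)$. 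Using the triple
  identities of \autoref{R:KT}\ref{R:KT:1} one checks that $\iota$ is a
  functor, and that it preserves finite limits: a finite limit in
  $\Posf^\op$ is a finite colimit $\Gamma=\colim_i\Gamma_i$ in
  $\Posf$, and its $\iota$-image is a limit cone in $\Tcat_\T$ because
  for every context $\Delta$ the contravariant hom-functor
  $\Posz(-,T\Delta)$ carries $\colim_i\Gamma_i$ to
  $\lim_i\Posz(\Gamma_i,T\Delta)=\lim_i\Tcat_\T(\Delta,\Gamma_i)$.

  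\emph{From theories to monads.} Conversely, to a Lawvere
  $\Pos$-theory $\Tcat$ one associates its category $\Mod\Tcat$ of
  models and the forgetful functor $\Mod\Tcat\to\Pos$ that recovers,
  from a model $A$ with $A\cdot\iota\cong\Posz(-,X)$, the poset $X$
  functorially (for instance, $X$ has underlying set $A(\mathbbm{1})$
  with the order read off from $A(\mathbbm{2})$). This functor has a
  left adjoint; since the objects of $\Tcat$ are just the contexts and
  $\iota$ preserves finite limits, $\Mod\Tcat$ is closed under
  filtered colimits in $[\Tcat,\Set]$ and the forgetful functor
  preserves them, so it is finitary monadic. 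The ensuing finitary
  monad on $\Pos$ is the monad associated with $\Tcat$.

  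\emph{The passages are mutually inverse, and the main obstacle.}
  Starting from $\T$ one proves $\Mod\Tcat_\T\simeq\Pos^\T$ over
  $\Pos$, so that the monad built from $\Mod\Tcat_\T$ is $\T$ again;
  starting from $\Tcat$ one checks that the theory of the monad on
  $\Mod\Tcat$ is $\Tcat$ itself, using that $\iota$ is
  identity-on-objects and finite-limit-preserving, so that the Kleisli
  data of that monad, restricted to contexts, reproduces the hom-sets
  of $\Tcat$. The delicate point --- and the reason we defer
  to~\cite{NP09} rather than carry this out in full --- is the
  equivalence $\Mod\Tcat_\T\simeq\Pos^\T$: one must argue that the
  model condition $A\cdot\iota\cong\Posz(-,X)$ together with
  finite-limit preservation rigidifies $A$ so that it is determined by
  the poset $X$ equipped with a single $\T$-algebra structure map on
  it. This is the same kind of density argument that makes the
  verification after \autoref{T:Pow} lengthy; consequently, as in the
  enriched case, this route does not shorten the direct proof of
  \autoref{C:nonenriched}.
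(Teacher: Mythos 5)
Your proposal is correct and follows essentially the same route as the paper: the paper states this result purely as a citation of Nishizawa--Power \cite[Cor.~5.2]{NP09}, accompanied only by the same description of the associated theory (hom-sets $\Posz(\Gamma', T\Gamma)$ from the restricted Kleisli category), and your sketch elaborates that same outline while, like the paper, deferring the substantive equivalence $\Mod\Tcat_\T \simeq \Pos^\T$ to the cited reference. The extra details you supply (the finite-limit preservation of $\iota$ via the hom-functor argument, and recovering the poset from $A(\mathbbm{1})$ and $A(\mathbbm{2})$) are sound and consistent with the paper's definitions.
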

\resetCurThmBraces

\section{Conclusion and Future Work}

Classical varieties of algebras are well known to correspond to
finitary monads on $\Set$. We have investigated the analogous
situation for the category of posets. It turns out that there are two
reasonable variants: one considers either all (ordinary) finitary
monads, or just the enriched ones, whose underlying endofunctor is
locally monotone.  (An orthogonal restriction, not considered here, is
to require the monad to be strongly finitary, which corresponds to
requiring the arities of operations to be discrete~\cite{ADV20}.) We
have defined the concept of a variety of ordered algebras using
signatures where arities of operation symbols are finite posets. We
have proved that these varieties bijectively correspond to
\begin{enumerate}
\item all finitary monads on Pos, provided that algebras are not
  required to have monotone operations, and
\item all enriched finitary monads on $\Pos$ for varieties of coherent
  algbras, i.e.~those with monotone operations. 
\end{enumerate}
\noindent
In both cases, `term'
has the usual meaning in universal algebra, and varieties are
classes presented by inequations in context.

Although we have concentrated entirely on posets, many
features of our paper can clearly be generalized to enriched locally
$\lambda$-presentable categories and the question of a semantic
presentation of (ordinary or enriched) $\lambda$-accessible monads.
For example, what type of varieties corresponds to countably
accessible monads on the category of metric spaces with distances at
most one (and nonexpanding maps)? Such varieties will be related to
Mardare et al.'s quantitative varieties~\cite{MardareEA16}
(aka.~$c$-varieties~\cite{MardareEA17,MU19}), probably extended by
allowing non-discrete arities of operation symbols.

Ji\v{r}\'i Rosick\'y (private communication) has suggested another possibility of presenting
finitary monads on $\Pos$: by applying the functorial semantics of
Linton~\cite{Linton69} to functors into $\Pos$ and taking the
appropriate finitary variation in the case where those functors are
finitary. We intend to pursue this idea in future work.

%
%
\smnote[inline]{\cite{ADV20} should be put on arXiv; we cannot cite
  something that is not available to referees.}

\bibliography{monadsrefs}
\bibliographystyle{myabbrv}

\end{document}